\newtheorem{rem}{Remark}
\newtheorem{thm}{Theorem}
\newtheorem{lem}{Lemma}
\newtheorem{cor}{Corollary}
\newtheorem{assumptionA}{A-\hspace{-1.2mm}}
\newtheorem{assumptionAB}{AB-\hspace{-1.2mm}}
\newtheorem{assumptionB}{B-\hspace{-1.2mm}}
\title[Explicit Approximations for L\'evy Driven SDEs]{On Explicit Approximations for L\'evy Driven SDEs with Super-linear Diffusion Coefficients}
\author[C. Kumar and S. Sabanis]{Chaman Kumar and Sotirios Sabanis \\ School of Mathematics \\ The University of Edinburgh \\\ United Kingdom}
\begin{document}
\maketitle
\begin{abstract}
Motivated by the results of \cite{sabanis2015}, we propose explicit Euler-type schemes for SDEs with random coefficients driven by L\'evy noise when the drift and diffusion coefficients can grow super-linearly. As an application of our results, one can construct explicit Euler-type schemes for SDEs with delays (SDDEs) which are driven by L\'evy noise and have super-linear coefficients. Strong convergence results are established and their rate of convergence is shown to be equal to that of the classical Euler scheme. It is proved that the optimal rate of convergence is achieved for $\mathcal{L}^2$-convergence which is consistent with the corresponding results available in the literature.
\end{abstract}
%---------------------------------------------------------------------------------------
\section{Introduction}
%---------------------------------------------------------------------------------------
Let $(\Omega, \{\mathscr{F}\}_{t \geq 0}, \mathscr{F}, P)$ be a filtered probability space satisfying the usual conditions. Let $w$ be $\mathbb{R}^m$-valued standard Wiener process and $N(dt,dz)$ be a Poisson random measure defined on $\sigma$-finite measure space $(Z,\mathscr{Z}, \nu)$ with intensity measure $\nu \not \equiv 0$ (for the case when $\nu \equiv 0$, readers can refer to \cite{sabanis2015}). Set $\tilde N(dt, dz):=N(dt, dz)-\nu(dz)dt$.

Let $b_t(x)$ and $\sigma_t(x)$ be $\mathscr{P}\otimes\mathscr{B}(\mathbb{R}^d)$-measurable functions  in $\mathbb{R}^d$ and $\mathbb{R}^{d \times m}$ respectively. Also, let $\gamma_t(x,z)$ be a $\mathscr{P}\otimes \mathscr{B}(\mathbb{R}^d)\otimes \mathscr{Z}$-measurable function in $\mathbb{R}^d$. Let $T>0$ be a constant and we fix $t_0$ and $t_1$  satisfying $0\leq t_0<t_1\leq T$.  In this article, we consider the following SDE,
\begin{align} \label{eq:sderc}
dx_t=b_t(x_t)dt+\sigma_t(x_t)dw_t+\int_{Z} \gamma_t(x_t,z) \tilde N(dt, dz)
\end{align}
almost surely for any $t\in [t_0, t_1]$ with initial value as an $\mathscr{F}_{t_0}$-measurable random variable $x_{t_0}$ in $\mathbb{R}^d$.
\begin{rem}
We use $x_t$ instead of $x_{t-}$ on the right hand side of the equation \eqref{eq:sderc} for notational convenience that  shall be used throughout this article. Moreover, this does not cause any problem because the compensators of the martingales driving the equation are continuous.
\end{rem}
%--------------------------------------------------------
For every $n \in \mathbb{N}$, suppose that the functions $b_t^n(x)$ and $\sigma_t^n(x)$ are $\mathscr{P}\otimes\mathscr{B}(\mathbb{R}^d)$-measurable and take values in $\mathbb{R}^d$ and $\mathbb{R}^{d\times m}$ respectively. Furthermore, let the function $\gamma_t^n(x,z)$ be $\mathscr{P}\otimes \mathscr{B}(\mathbb{R}^d) \otimes \mathscr{Z}$-measurable with values in $\mathbb{R}^d$ for every $n \in \mathbb{N}$.  In this article, we propose an explicit Euler-type  scheme defined below. For every $n \in \mathbb{N}$,
\begin{align} \label{eq:esrc}
dx_t^n=b_t^n(x_{\kappa(n,t)}^n)dt+\sigma_t^n(x_{\kappa(n,t)}^n)dw_t+\int_Z \gamma_t^n(x_{\kappa(n,t)}^n, z) \tilde{N}(dt,dz)
\end{align}
almost surely for any $t \in [t_0,t_1]$ with initial value as an $\mathscr{F}_{t_0}$-measurable random variable $x_{t_0}^n$ in $\mathbb{R}^d$. Also,  the function $\kappa(n,t)$ is given by $\kappa(n,t):=\lfloor n(t-t_0)\rfloor/n+t_0$ for any $t \in [t_0, t_1]$.
\newline \newline
%--------------------------------------------------------
The SDEs of type \eqref{eq:sderc} are popular models in finance, economics, engineering, ecology, medical sciences  and many other areas where problems are influenced by event-driven uncertainties. Often, such SDEs do not possess any explicit solution and one has to resort to numerical schemes to obtain their approximate solutions. Details of explicit and implicit schemes for SDEs driven by L\'evy noise can be found in \cite{platen2010} and the references therein. It is well known that the moments of the classical Euler scheme of SDE \eqref{eq:sderc} may diverge to infinity in finite time when the coefficients of the SDE grow super-linearly -  \cite{hutz2010} proved this result for SDEs with continuous paths. For  SDEs with super-linear coefficients, implicit schemes can be used to obtain their approximate solutions, but they are typically computationally very demanding. In recent years, the focus has been shifted to the development of efficient, explicit numerical schemes with optimal rates of convergence and a stream of research articles has appeared in the literature which reported significant progress in this direction. For continuous SDEs, one can refer to \cite{hutz2012, hutz2014, hutz2015, kumar2016b, sabanis2013, sabanis2015, tret2013} and the references therein, whereas for SDEs driven by L\'evy noise, one can refer to \cite{dare2016, kumar2016a}. Moreover, new results appeared in the direction of non-polynomial lower error bounds for approximations of nonlinear SDEs, see \cite{JM-GY2016, Yaros2016}.

In this article, we propose an explicit Euler-type scheme \eqref{eq:esrc} of SDE \eqref{eq:sderc} where both drift and diffusion coefficients are allowed to grow superlinearly, whereas the jump coefficient can grow linearly. The strong convergence is established and the rate of convergence is shown to be equal to that of the classical Euler scheme. To the best of the authors' knowledge, these are the first such results in the literature for L\'evy driven SDEs. 

Further, the techniques discussed in this article and in \cite{kumar2016a, kumar2016b} can be combined to develop explicit Milstein-type and higher-order schemes which converge to SDEs \eqref{eq:sderc} with super-linear drift and diffusion coefficients in the strong sense, however this is not the focus of the current article.  Finally, by adopting the approach of \cite{gyongy2012, kumar2014}, the results obtained here can also be extended to the case of delay equations (SDDEs) as illustrated in Section \ref{SDDE section} below.

To conclude this section, let us introduce some basic notation. We use $|x|$ to denote the Euclidean norm of $x\in \mathbb{R}^d$, $|\sigma|$ and $\sigma^*$ to denote the Hilbert-Schmidt norm  and the transpose of $\sigma \in \mathbb{R}^{d \times m}$ respectively. For any  $x,y \in \mathbb{R}^d$, $xy$ stands for their inner product. $I_A$ stands for the indicator function of a set $A$ and $\lfloor x \rfloor$ for the integer part of a real number $x$. For an $\mathbb{R}^d$-valued random variable $X$, $X \in L^p(\Omega)$ means $E|X|^p<\infty$ and for a sequence $\{X^n\}_{n \in \mathbb{N}}$ of $\mathbb{R}^d$-valued random variables, $\{X^n\}_{n \in \mathbb{N}} \in l_\infty(L^p(\Omega))$ means $\sup_{n \in \mathbb{N}}E|X^n|^p<\infty$.  $\mathscr{B}(V)$ denotes the Borel sigma-algebra of a topological space $V$. $\mathscr{P}$ is the predictable sigma-algebra on $\Omega \times \mathbb{R}_{+}$.   Throughout this article, $K>0$ denotes a generic constant that  varies from place to place.

\section{Assumptions and Description of Results}
%-----------------------------------------------------------
We fix $p_0 \geq 2$ and make the following assumptions for SDE \eqref{eq:sderc}. For every $R>0$, consider $C(R)$ which is an $\mathscr{F}_{t_0}$-measurable random variable such that
\begin{align} \notag
\lim_{R \to \infty} P(C(R)>f(R))=0
\end{align}
for a non-decreasing function $f:\mathbb{R}_+ \to \mathbb{R}_+$.
%-----------------------------------------------------------
\begin{assumptionA} \label{as:sderc:initial:value}
$x_{t_0} \in L^{p_0}(\Omega)$.
\end{assumptionA}
%------------------------------------------------------
\begin{assumptionA} \label{as:sderc:coercivity}
%-------------------------------------------------------
There exist a constant $L>0$ and an $\mathscr{F}_{t_0}$-measurable random variable $M \in L^\frac{p_0}{2}(\Omega)$ such that
\begin{align}
  2 x b_t(x) + (p_0-1)|\sigma_t(x)|^2 \vee \int_Z|\gamma_t(x, z)|^2 \nu(dz)\leq L(M+|x|^2) \notag
\end{align}
almost surely for any $t \in [t_0, t_1]$ and $x\in \mathbb{R}^d$.
\end{assumptionA}
%---------------------------------------------------
\begin{assumptionA}   \label{as:sderc:gamma:growth}
There exist a constant $L>0$ and an $\mathscr{F}_{t_0}$-measurable random variable $N \in L(\Omega)$ such that
$$
\int_Z |\gamma_t(x,z)|^{p_0} \nu(dz) \leq L(N+|x|^{p_0})
$$
almost surely for $t \in [t_0,t_1]$  and $x \in \mathbb{R}^d$.
\end{assumptionA}
%-----------------------------------------------------
\begin{assumptionA} \label{as:convergence:localmonotonicity}
For every $R>0$,
\begin{align*}
2(x-\bar{x})&(b_t(x)-b_t(\bar{x}))+|\sigma_t(x)-\sigma_t(\bar{x})|^2
\\
& +\int_Z |\gamma_t(x,z)-\gamma_t(\bar{x},z)|^2 \nu(dz) \leq C(R)|x-\bar{x}|^2
\end{align*}
almost surely whenever $|x|\vee |\bar{x}| \leq R$ for any  $t \in [t_0, t_1]$.
\end{assumptionA}
%-----------------------------------------------------------
\begin{assumptionA} \label{as:convergence:localbound}
For every $R>0$,
$$
\sup_{|x| \leq R}|b_t(x)| \leq C(R)
$$
almost surely for any $t \in [t_0, t_1]$.
\end{assumptionA}
%---------------------------------------------------------
\begin{assumptionA} \label{as:sderc:continuity}
The function $b_t(x)$ is continuous in $x \in \mathbb{R}^d$ for every $\omega \in \Omega$ and $t \in [t_0, t_1]$.
\end{assumptionA}
%------------------------------------------------------
We make the following assumptions for the Euler-type scheme \eqref{eq:esrc}.
%------------------------------------------------------
\begin{assumptionB} \label{as:esrc:initial}
$\{x_{t_0}^n\}_{n \in \mathbb{N}}\in l_{\infty}(L^{p_0}(\Omega))$.
\end{assumptionB}
%----------------------------------------------------------
\begin{assumptionB} \label{as:esrc:coercivity}
There exist a constant $L>0$ and a sequence of $\mathscr{F}_{t_0}$-measurable random variables $\{{M}^n\}_{n \in \mathbb{N}} \in l_\infty({L}^{\frac{p_0}{2}}(\Omega))$ such that,
$$
2xb_t^n(x)+(p_0-1)|\sigma_t^n(x)|^2 \vee \int_Z |\gamma_t^n(x,z)|^2 \nu(dz) \leq L({M}^n+|x|^2)
$$
almost surely for any $t \in [t_0, t_1]$, $n \in \mathbb{N}$ and $x \in \mathbb{R}^d$.
\end{assumptionB}
%----------------------------------------------------------
\begin{assumptionB} \label{as:esrc:gamma:growth}
There exist a constant $L>0$ and a sequence of $\mathscr{F}_{t_0}$-measurable random variables $\{N^n\}_{n \in \mathbb{N}} \in l_\infty(L(\Omega))$ such that,
$$
\int_Z |\gamma_t^n(x,z)|^{p_0} \nu(dz) \leq L({N}^n+|x|^{p_0})
$$
almost surely for any $t \in [t_0, t_1]$,  $n \in \mathbb{N}$ and $x \in \mathbb{R}^d$.
\end{assumptionB}
%----------------------------------------------------------
\begin{assumptionB} \label{as:esrc:taming}
There exist a constant $L>0$ and a sequence of $\mathscr{F}_{t_0}$-measurable random variables $\{{M}^n\}_{n \in \mathbb{N}} \in l_\infty({L}^{\frac{p_0}{2}}(\Omega))$ such that,
\begin{align*}
|b_t^n(x)|^2 & \leq L n^{1/2} ({M}^n+|x|^2)
\\
|\sigma_t^n(x)|^2 & \leq L n^{1/2} ({M}^n+|x|^2)
\end{align*}
almost surely for any $t \in [t_0, t_1]$,  $n \in \mathbb{N}$ and $x \in \mathbb{R}^d$.
\end{assumptionB}
%-----------------------------------------------------------
%The assumptions mentioned below define closeness of the coefficients of SDE \eqref{eq:sderc} and scheme \eqref{eq:esrc}.
%-----------------------------------------------------------
\begin{assumptionAB}\label{as:convergence:con}
For every $R>0$,
\begin{align*}
\lim_{n \to \infty} E\int_{t_0}^{t_1}&I_{\{ {C}(R) \leq f(R)\}}\sup_{|x| \leq R} \{ |b_t(x)-b_t^n(x)|^2+|\sigma_t(x)-\sigma_t^n(x)|^2
\\
&+\int_Z |\gamma_t(x,z)-\gamma_t^n(x,z)|^2 \nu(dz) \}dt=0.
\end{align*}
\end{assumptionAB}
%-----------------------------------------------------------
\begin{assumptionAB} \label{as:convergence:initial}
The sequence $\{x_{t_0}^n\}_{n \in \mathbb{N}}$  converges in probability to $x_{t_0}$.
\end{assumptionAB}
%-----------------------------------------------------------
\begin{thm} \label{thm:convergence:rc}
Let Assumptions A-\ref{as:sderc:initial:value} to A-\ref{as:sderc:continuity}, B-\ref{as:esrc:initial} to B-\ref{as:esrc:taming},  AB-\ref{as:convergence:con} and AB-\ref{as:convergence:initial} be satisfied. Then, the explicit Euler-type scheme \eqref{eq:esrc} converges to the true solution of SDE \eqref{eq:sderc} in $\mathcal{L}^p$-sense, i.e.
$$
\lim_{n \to \infty} \sup_{t_0 \leq t \leq t_1}E|x_t-x_t^n|^{p}=0
$$
for any $p < p_0$.
\end{thm}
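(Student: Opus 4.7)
The plan is to follow the classical It\^o--Gy\"ongy--Krylov localisation strategy, adapted to the jump setting. First, establish uniform $L^{p_0}$ moment bounds for both the true solution and the scheme. Second, localise via stopping times that keep the processes bounded and restrict attention to an $\mathscr F_{t_0}$-measurable set on which A-\ref{as:convergence:localmonotonicity} reduces to a deterministic local Lipschitz condition. Third, apply It\^o's formula to $|x-x^n|^2$, split the coefficient differences into a monotonicity part, a one-step-error part, and a scheme-vs-SDE part, and close via Gronwall. Finally, interpolate the resulting $\mathcal L^2$-convergence against the $L^{p_0}$-bound to upgrade to $\mathcal L^p$ for any $p<p_0$, which is precisely the source of the strict inequality.

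For the moment bound on the scheme I would apply It\^o's formula to $|x_t^n|^{p_0}$ and split the drift pairing as
\[
 x_t^n\,b_t^n(x^n_{\kappa(n,t)}) = x^n_{\kappa(n,t)}\,b_t^n(x^n_{\kappa(n,t)}) + \bigl(x_t^n - x^n_{\kappa(n,t)}\bigr)\,b_t^n(x^n_{\kappa(n,t)}).
\]
The first summand is absorbed by B-\ref{as:esrc:coercivity}, while the second is controlled via Young's inequality together with B-\ref{as:esrc:taming} and the one-step estimate $E|x_t^n - x^n_{\kappa(n,t)}|^{p_0} = O(n^{-1/2})$, a routine consequence of B-\ref{as:esrc:gamma:growth}, B-\ref{as:esrc:taming}, and BDG. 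Combined with the analogous treatment of the $\sigma^n$ and $\gamma^n$ terms using B-\ref{as:esrc:coercivity} and B-\ref{as:esrc:gamma:growth}, Gronwall yields $\sup_n\sup_t E|x_t^n|^{p_0}<\infty$. The estimate on $x$ itself is standard under A-\ref{as:sderc:initial:value}--A-\ref{as:sderc:gamma:growth}.

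Next, let $\tau_R=\inf\{t\ge t_0:|x_t|\ge R\}$, $\tau_R^n=\inf\{t\ge t_0:|x_t^n|\ge R\}$, $\rho_R^n=\tau_R\wedge\tau_R^n$, and $\Omega_R=\{C(R)\le f(R)\}\in\mathscr F_{t_0}$. Apply It\^o's formula to $|x_{\cdot\wedge\rho_R^n}-x^n_{\cdot\wedge\rho_R^n}|^2 I_{\Omega_R}$ and decompose
\[
 b_t(x_t) - b_t^n(x^n_{\kappa(n,t)}) = \bigl[b_t(x_t) - b_t(x_t^n)\bigr] + \bigl[b_t(x_t^n) - b_t(x^n_{\kappa(n,t)})\bigr] + \bigl[b_t(x^n_{\kappa(n,t)}) - b_t^n(x^n_{\kappa(n,t)})\bigr],
\]
with analogous splittings for $\sigma$ and $\gamma$. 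On $\Omega_R\cap\{t<\rho_R^n\}$ the first brackets, combined across $b,\sigma,\gamma$, are absorbed by A-\ref{as:convergence:localmonotonicity} and produce $f(R)|x_t-x_t^n|^2$; the $\sigma$- and $\gamma$-analogues of the second brackets are controlled by the local-Lipschitz consequence of A-\ref{as:convergence:localmonotonicity} together with the one-step increment bound, while the $b$-analogue is handled via A-\ref{as:convergence:localbound}, A-\ref{as:sderc:continuity}, and dominated convergence on $\{|x|\le R\}$; the third brackets vanish in $L^1$ on $\Omega_R\cap\{|x^n_{\kappa(n,t)}|\le R\}$ by AB-\ref{as:convergence:con}. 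Using AB-\ref{as:convergence:initial} for the initial data and invoking Gronwall then yields
\[
 \lim_{n\to\infty}\sup_{t_0\le t\le t_1}E\bigl[|x_{t\wedge\rho_R^n}-x^n_{t\wedge\rho_R^n}|^2 I_{\Omega_R}\bigr] = 0
\]
for each fixed $R$. Chebyshev with the uniform $L^{p_0}$ bound gives $P(\rho_R^n<t_1)\le KR^{-p_0}$ uniformly in $n$, and $P(\Omega_R^c)\to 0$ by the defining property of $C(R)$; interpolating between the localised $L^2$-convergence and the uniform $L^{p_0}$-bound delivers $\sup_t E|x_t-x_t^n|^p\to 0$ for every $p<p_0$.

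The principal obstacle is the second bracket in the drift decomposition, $b_t(x_t^n)-b_t(x^n_{\kappa(n,t)})$: A-\ref{as:convergence:localmonotonicity} gives local monotonicity but not Lipschitz continuity for $b$ alone, so one cannot bound this difference directly by $|x_t^n-x^n_{\kappa(n,t)}|$ times a finite constant. Instead one combines the local bound A-\ref{as:convergence:localbound} and continuity A-\ref{as:sderc:continuity} with the one-step convergence $|x_t^n-x^n_{\kappa(n,t)}|\to 0$ in probability, and applies dominated convergence on the compact set $\{|x|\le R\}$, where $|b_t|$ is uniformly bounded by $C(R)\le f(R)$ on $\Omega_R$. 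A secondary technical point is that the cutoff $I_{\Omega_R}$, though $\mathscr F_{t_0}$-measurable and hence harmless inside expectations, must be carried through every application of BDG, the It\^o isometry, and Fubini without disturbing the martingale structure of the stochastic and compensated Poisson integrals.
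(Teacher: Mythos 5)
Your proposal is correct and follows essentially the same route as the paper: uniform $L^{p_0}$ moment bounds for $x$ and $x^n$, localisation by the stopping times $\tilde\tau_R\wedge\bar\tau_{nR}$ together with the $\mathscr F_{t_0}$-measurable event $\{C(R)\le f(R)\}$, It\^o's formula applied to $|x-x^n|^2$ with the coefficient differences split into a monotonicity part, a one-step part and a scheme-versus-SDE part closed by Gronwall's lemma and Assumption AB-\ref{as:convergence:con}, and finally Chebyshev plus interpolation against the $L^{p_0}$ bounds to pass from $\mathcal{L}^2$ to $\mathcal{L}^p$ for every $p<p_0$. The only (harmless) deviation is in the middle drift bracket: the paper applies A-\ref{as:convergence:localmonotonicity} to the pair $(x_s,x^n_{\kappa(n,s)})$ and disposes of the leftover cross term $(x^n_{\kappa(n,s)}-x^n_s)(b_s(x_s)-b_s(x^n_{\kappa(n,s)}))$ using only the local bound A-\ref{as:convergence:localbound} and the $L^1$ one-step estimate of Corollary \ref{cor:convergence:onestep}, whereas you apply A-\ref{as:convergence:localmonotonicity} to $(x_s,x^n_s)$ and handle $b_s(x^n_s)-b_s(x^n_{\kappa(n,s)})$ via A-\ref{as:convergence:localbound}, A-\ref{as:sderc:continuity} and dominated convergence --- both work.
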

The proof of the above theorem can be found in Section \ref{sec:rc:proof:rate}.

For the rate of convergence of the scheme \eqref{eq:esrc}, we fix a constant $p_1\geq 2$ and consider any $p$ satisfying $p<p_1$ and $\chi p (p+\delta)/\delta \leq p_0$ for a $\delta >0$ (however small). Moreover, one replaces Assumptions A-\ref{as:convergence:localmonotonicity},  AB-\ref{as:convergence:con}  and AB-\ref{as:convergence:initial} by the following assumptions.
%-----------------------------------------------------
\begin{assumptionA} \label{as:rate:rc:monotonicity}
There exists a constant $C>0$ such that
\begin{align*}
2(x-\bar{x})&(b_t(x)-b_t(\bar{x}))+(p_1-1)|\sigma_t(x)-\sigma_t(\bar{x})|^2
\\
& \vee \int_Z|\gamma_t(x,z)-\gamma_t(\bar{x},z)|^2\nu(dz) \leq C|x-\bar{x}|^2
\end{align*}
almost surely for any $t \in [t_0,t_1]$ and $x,\bar{x} \in \mathbb{R}^d$.
\end{assumptionA}
%---------------------------------------------------------
\begin{assumptionA} \label{as:rate:rc:gamma:lipschitz}
There exist a constant $C>0$ such that
\begin{align*}
\int_Z|\gamma_t(x,z)-\gamma_t(\bar{x},z)|^{p}\nu(dz) \leq C|x-\bar{x}|^{p}
\end{align*}
almost surely for any $t \in [t_0, t_1]$ and $x,\bar{x} \in \mathbb{R}^d$
\end{assumptionA}
%--------------------------------------------------------
\begin{assumptionA} \label{as:rate:rc:polylipschitz}
There exist a constant  $C>0$ and $\chi>0$ such that
\begin{align*}
|b_t(x)-b_t(\bar{x})| &\leq C(1+|x|^\chi+|\bar{x}|^\chi)|x-\bar{x}|
\end{align*}
almost surely for any $t \in [t_0, t_1]$ and $x,\bar{x} \in \mathbb{R}^d$.
\end{assumptionA}
%--------------------------------------------------------
\begin{assumptionB} \label{as:rate:rc:bn:polygrowth}
There exist constants $L>0$, $\chi>0$ and a sequence of $\mathscr{F}_{t_0}$-measurable random variables $\{{M}^n\}_{n \in \mathbb{N}} \in l_\infty({L}^{p_0}(\Omega))$ such that,
\begin{align*}
|b_t^n(x)| \leq L({M}^n+|x|^{\chi+1})
\end{align*}
almost surely for any $t \in [t_0, t_1]$, $n \in \mathbb{N}$ and $x \in \mathbb{R}^d$.
\end{assumptionB}%--------------------------------------------------------
\begin{assumptionAB} \label{as:rate:rc:rate}
There exists a constant $L>0$ such that, for every $n \in \mathbb{N}$,
\begin{align*}
E\int_{t_0}^{t_1} &\big\{|b_t(x_{\kappa(n,t)}^n)-b_t^n(x_{\kappa(n,t)}^n)|^p+|\sigma_t(x_{\kappa(n,t)}^n)-\sigma_t^n(x_{\kappa(n,t)}^n)|^p \notag
\\
&+\Big(\int_Z|\gamma_t(x_{\kappa(n,t)}^n,z)-\gamma_t^n(x_{\kappa(n,t)}^n,z)|^\rho \nu(dz) \Big)^\frac{p}{\rho} \big\} dt \leq L n^{-\frac{p}{p+\delta}}
\end{align*}
for $\rho=2, p$.
\end{assumptionAB}
%--------------------------------------------------------
\begin{assumptionAB} \label{as:rate:rc:initial}
There exists a constant $L>0$ such that,
\begin{align*}
E|x_{t_0}-x_{t_0}^n|^p \leq Ln^{-\frac{p}{p+\delta}}
\end{align*}
for every $n \in \mathbb{N}$.
\end{assumptionAB}
%----------------------------------------------------------
\begin{thm} \label{thm:rate:rc}
Let Assumptions A-\ref{as:sderc:initial:value} to A-\ref{as:sderc:gamma:growth}, A-\ref{as:convergence:localbound}, A-\ref{as:rate:rc:monotonicity} to A-\ref{as:rate:rc:polylipschitz}, B-\ref{as:esrc:initial} to B-\ref{as:rate:rc:bn:polygrowth},  AB-\ref{as:rate:rc:rate} and AB-\ref{as:rate:rc:initial} hold. Then, the explicit Euler-type scheme \eqref{eq:esrc} converges to the true solution of SDE \eqref{eq:sderc} in  $\mathcal{L}^p$-sense with a rate of convergence arbitrarily close to $1/p$ i.e., for every $n \in \mathbb{N}$,
\begin{align*}
\sup_{t_0 \leq t \leq t_1}E|x_t-x_t^n|^{p} \leq Kn^{-\frac{p}{p+\delta}}
\end{align*}
where the positive constant $K$ does not depend on $n$.
\end{thm}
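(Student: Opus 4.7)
The plan is to apply Itô's formula to $|e_t|^p$ with $e_t := x_t - x_t^n$, use the rate monotonicity A-\ref{as:rate:rc:monotonicity} to generate the Gronwall-type contraction term, and control the remainder by adding and subtracting the intermediate quantities $b_s(x_s^n)$ and $b_s(x_{\kappa(n,s)}^n)$ (and analogously for $\sigma$ and $\gamma$), which separates the SDE-native part, the time-discretisation part, and the coefficient-approximation part.

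As a preliminary, I would establish two quantitative estimates. First, the uniform $\mathcal{L}^{p_0}$-moment bounds $\sup_{t \in [t_0,t_1]} E|x_t|^{p_0} < \infty$ and $\sup_{n}\sup_{t \in [t_0,t_1]} E|x_t^n|^{p_0} < \infty$; these follow from Itô's formula applied to $|\cdot|^{p_0}$ together with the coercivity conditions A-\ref{as:sderc:coercivity}, A-\ref{as:sderc:gamma:growth}, B-\ref{as:esrc:coercivity}, B-\ref{as:esrc:gamma:growth} (note that the factor $p_0-1$ in these assumptions is exactly the constant produced by Itô at power $p_0$), followed by localisation and Gronwall. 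Second, a one-step increment estimate of the form $E|x_t^n - x_{\kappa(n,t)}^n|^q \leq K n^{-\min(1,q/2)}$ for $q \in [2, p_0]$; this uses the taming bound B-\ref{as:esrc:taming} with H\"older on the drift integral, B-\ref{as:esrc:coercivity} with the Burkholder--Davis--Gundy inequality on the Wiener integral, and a Kunita-type inequality with B-\ref{as:esrc:coercivity} and B-\ref{as:esrc:gamma:growth} on the compensated Poisson integral (it is the $L^q$-summand of Kunita that saturates at $n^{-1}$ when $q>2$).

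Applying Itô's formula to $|e_t|^p$ and taking expectations (after localisation to remove the local-martingale parts) produces a drift integral, a quadratic-variation integral in $|\sigma_s(x_s) - \sigma_s^n(x_{\kappa(n,s)}^n)|^2$, and a Poisson-compensator integral
\begin{align*}
\int_Z \bigl[|e_s + \Gamma_s(z)|^p - |e_s|^p - p|e_s|^{p-2} e_s \cdot \Gamma_s(z)\bigr] \nu(dz),
\end{align*}
with $\Gamma_s(z) := \gamma_s(x_s,z) - \gamma_s^n(x_{\kappa(n,s)}^n, z)$. The elementary inequality $|e+\eta|^p - |e|^p - p|e|^{p-2}e\cdot \eta \leq C_p(|e|^{p-2}|\eta|^2 + |\eta|^p)$ splits this compensator into a quadratic piece and a $p$-th power piece. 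Writing each coefficient difference in the three-term decomposition above, the SDE-native contributions combine via A-\ref{as:rate:rc:monotonicity} (absorbing the quadratic compensator piece, with the condition $p < p_1$ ensuring that the Itô coefficient $p-1$ does not exceed $p_1 - 1$) and via A-\ref{as:rate:rc:gamma:lipschitz} (absorbing the $|\eta|^p$ compensator piece) to produce a bound $\le C \int_{t_0}^{t}E|e_s|^p\,ds$.

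The time-discretisation contributions, typified by $E|e_s|^{p-1}|b_s(x_s^n) - b_s(x_{\kappa(n,s)}^n)|$, are controlled by the polynomial Lipschitz assumption A-\ref{as:rate:rc:polylipschitz}, Young's inequality (producing an absorbable $\epsilon E|e_s|^p$ and a remainder), and H\"older with exponents $\bigl(\tfrac{p+\delta}{\delta},\tfrac{p+\delta}{p}\bigr)$: the moment constraint $\chi p(p+\delta)/\delta \leq p_0$ is exactly what is needed to bound the polynomial factor via the first preliminary, while the one-step estimate at $q = p+\delta$ supplies a factor $n^{-p/(p+\delta)}$. The coefficient-approximation contributions are bounded directly by AB-\ref{as:rate:rc:rate}, the initial condition by AB-\ref{as:rate:rc:initial}, and Gronwall's inequality then delivers the conclusion. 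I expect the main technical obstacle to be the bookkeeping of the Poisson compensator: the $|\eta|^p$ remainder forces the separate $L^p$-Lipschitz hypothesis A-\ref{as:rate:rc:gamma:lipschitz} (the $L^2$ information being bundled into A-\ref{as:rate:rc:monotonicity}), and one must carefully verify that the super-linear growth B-\ref{as:rate:rc:bn:polygrowth} of $b_t^n$ is compatible with the available $\mathcal{L}^{p_0}$-moment on $M^n$ throughout the localisation step.
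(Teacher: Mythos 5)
Your proposal follows essentially the same route as the paper: It\^o's formula applied to $|x_t-x_t^n|^p$, the three-term decomposition of each coefficient difference, Young's inequality to upgrade the It\^o factor $p-1$ to $p_1-1$ so that A-\ref{as:rate:rc:monotonicity} applies, the remainder-formula splitting of the Poisson compensator into quadratic and $p$-th power pieces handled by A-\ref{as:rate:rc:monotonicity} and A-\ref{as:rate:rc:gamma:lipschitz} respectively, H\"older with exponents $(\tfrac{p+\delta}{\delta},\tfrac{p+\delta}{p})$ against the one-step estimate at order $p+\delta$, and Gronwall, exactly as in Section \ref{sec:rc:proof:rate}. The only cosmetic difference is that in the paper the sharpened one-step bound $\sup_t E|x_t^n-x^n_{\kappa(n,t)}|^\rho\le Kn^{-1}$ (Lemma \ref{lem:rate:rc:one-step}) is driven by the polynomial growth B-\ref{as:rate:rc:bn:polygrowth} and Remark \ref{rem:rate:sig:gam:plygrowth} together with the moment bound of Lemma \ref{lem:esrc:momentbound}, rather than by the taming bound B-\ref{as:esrc:taming} alone, a point you in any case flag at the end.
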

The proof of the above theorem can be found in Section \ref{sec:rc:proof:rate}. Notice that the optimal rate of convergence in the above theorem is attained for $p=2$ which is arbitrarily close to $0.5$. Moreover, the rate of convergence coincide with that of the classical Euler scheme. In the following two sections, we provide examples of SDE and SDDE that can fit into our model.
%----------------------------------------------------------
\subsection{Explicit Euler-type scheme for SDE driven by L\'evy noise}
Let $\beta_t(x)$ and $\alpha_t(x)$ be $\mathscr{B}([0,T])\otimes \mathscr{B}(\mathbb{R}^d)$-measurable functions in $\mathbb{R}^d$ and $\mathbb{R}^{d \times m}$ respectively. Also, $\lambda_t(x,z)$ is a $\mathscr{B}([0,T])\otimes \mathscr{B}(\mathbb{R}^d)\otimes \mathscr{Z}$-measurable function in $\mathbb{R}^d$. We consider the following SDE,
\begin{align} \label{eq:sde}
dx_t=\beta_t(x_t)dt+\alpha_t(x_t)dw_t+\int_{Z} \lambda_t(x_t,z) \tilde{N}(dt,dz)
\end{align}
almost surely for any $t \in [0,T]$ with initial value $x_0 \in L^{p_0}(\Omega)$. Notice that one defines SDE \eqref{eq:sde} as a special case of SDE \eqref{eq:sderc} with $t_0=0$, $t_1=T$ and
\begin{align*}
b_t(x):=\beta_t(x), \sigma_t(x):=\alpha_t(x), \gamma_t(x,z):=\lambda_t(x,z)
\end{align*}
for any $t \in [0,T]$ and $x \in \mathbb{R}^d$. Moreover, in the assumptions listed above on the coefficients of SDE \eqref{eq:sderc}, one uses ${M}\equiv 1$ whereas for every $R>0$, ${C}(R)$ is a positive constant.  Similarly, one can define an explicit Euler-type scheme of SDE \eqref{eq:sde} as a special case of the scheme \eqref{eq:esrc} with the following mappings,
\begin{align*}
b_t^n(x):=\frac{\beta_t(x)}{1+n^{-1/2}|x|^{2\chi}}, \sigma_t^n(x):=\frac{\alpha_t(x)}{1+n^{-1/2}|x|^{2\chi}}, \gamma_t^n(x,z):= \lambda_t(x,z)
\end{align*}
for any $n \in \mathbb{N}$, $t \in [0,T]$, $x \in \mathbb{R}^d$ and $z \in Z$ with $x_0^n=x_0$.  It is easy to verify that Assumptions B-\ref{as:esrc:coercivity} to B-\ref{as:rate:rc:bn:polygrowth}, AB-\ref{as:convergence:con} and AB-\ref{as:rate:rc:rate} are satisfied. Hence, the results of Theorems [\ref{thm:convergence:rc}, \ref{thm:rate:rc}] hold true.
\begin{rem} \label{rem:about:improv}
Notice that in the above example, coefficients of the SDE \eqref{eq:sderc} and the scheme \eqref{eq:esrc} are deterministic. In this case, one can use the following condition on $b_t^n(x)$ in Assumption B-\ref{as:esrc:taming},
\begin{align*}
|b_t^n(x)| \leq L n^{1/2} (1+|x|)
\end{align*}
with the below mentioned coefficients,
\begin{align*}
b_t^n(x)=\frac{\beta_t(x)}{1+n^{-1/2}|x|^\chi}
\end{align*}
for any $t \in [t_0, t_1]$, $n \in \mathbb{N}$ and $x \in \mathbb{R}^d$. The proof of the Lemma \ref{lem:esrc:momentbound} is then followed in similar way as done in \cite{kumar2016b, sabanis2015} because in such a case, $b_t(x_{\kappa(n,t)}^n)$ remains $\mathscr{F}_{\kappa(n,t)}$-measurable in order to eliminate the stochastic integral in the second term of the right hand side of \eqref{eq:esrc:for:remark}.  Hence, this approach does not increase the moment bound requirements on the initial value as has been attained in \cite{sabanis2015}.
\end{rem}
%----------------------------------------------------------
\subsection{Explicit Euler-type scheme for SDDE driven by L\'evy noise} \label{SDDE section}
Let $\beta_t(y_1,\ldots, y_k,x)$ and $\alpha_t(y_1,\ldots, y_k,x)$ be $\mathscr{B}([0,T])\otimes \mathscr{B}(\mathbb{R}^{d\times k})\otimes \mathscr{B}(\mathbb{R}^{d})$-measurable functions in $\mathbb{R}^d$ and $\mathbb{R}^{d \times m}$ respectively. Also, $\lambda_t(y_1,\ldots, y_k,x)$ is a $\mathscr{B}([0,T])\otimes \mathscr{B}(\mathbb{R}^{d\times k})\otimes \mathscr{B}(\mathbb{R}^{d}) \otimes \mathscr{Z}$-measurable function in $\mathbb{R}^d$. Further, let $d_1(t),\ldots,d_k(t)$ be increasing functions of $t$ satisfying $-H \leq d_i(t) \leq \lfloor t/h \rfloor h$ for fixed constants $h>0$ and $H>0$ for every $i=1,\ldots,k$. We consider the following SDDE,
\begin{align} \label{eq:sdde}
dx_t=\beta_t(y_t,x_t)dt+\alpha_t(y_t,x_t)dw_t+\int_Z \lambda_t(y_t,x_t,z) \tilde{N}(dt,dz)
\end{align}
almost surely for any $t \in [0,T]$ with initial data $x_t=\xi_t$ for any $t \in [-H,0]$ satisfying $E\sup_{-H \leq t \leq 0}|\xi_t|^{p_0}<\infty$, where $y_t:=(x_{d_1(t)}, \ldots, x_{d_k(t)})$. The SDDE \eqref{eq:sdde} can be regarded as a special case of SDE \eqref{eq:sderc} with the following mappings,
\begin{align*}
b_t(x):=\beta_t(y_t,x), \sigma_t(x):=\alpha_t(y_t,x), \gamma_t(x_t,z):=\lambda_t(y_t,x,z)
\end{align*}
almost surely for any $t\in [0,T]$, $x \in \mathbb{R}^d$ and $z \in Z$. Suppose that the function $\beta_t(y,x)$ satisfies $\beta_t(y,x) \leq L(1+|y|^{\chi_1}+|x|^{\chi_2})$ for any $y \in \mathbb{R}^{d \times k}$ and $x \in \mathbb{R}^d$, where $L$, $\chi_1$ and $\chi_2$ are  positive constants.  Then, the explicit Euler-type scheme of SDDE \eqref{eq:sdde} can be defined with the following mappings,
\begin{align*}
b_t^n(x):=\frac{\beta_t(y_t^n,x)}{1+n^{-1/2}(|y_t^n|^{2 \chi_1}+|x|^{2\chi_2})},& \sigma^n_t(x):=\frac{\alpha_t(y_t^n,x)}{1+n^{-1/2}(|y_t^n|^{2 \chi_1}+|x|^{2\chi_2})}
\\
\gamma_t^n(x, z):=&\lambda_t(y_t^n,x, z)
\end{align*}
almost surely for any $t \in [0,T]$ and $x \in \mathbb{R}^d$. By adopting the approach of \cite{dare2016}, one can show that Theorems [\ref{thm:convergence:rc}, \ref{thm:rate:rc}] hold true.
%-----------------------------------------------------------
\section{Moment Bounds}
%-----------------------------------------------------------
We make the following observations.
\begin{rem} \label{rem:rc:local:bound:sig:gam}
Due to Assumptions A-\ref{as:sderc:coercivity} and A-\ref{as:convergence:localbound}, for every $R>0$,
\begin{align*}
|\sigma_t(x)|^2 \vee \int_Z|\gamma_t(x,z)|^2 \nu(dz) \leq {C}(R)
\end{align*}
almost surely for any $t \in [t_0, t_1]$ whenever $|x| \leq R$.
\end{rem}
%--------------------------------------------------------
%The following lemma is used in this article which can be found in \cite{mikulevicius2012}.
%\begin{lem}
%Suppose $r \geq 2$ and $T>0$. There exists a constant $K$ which depends only on $r$ such that for every $\mathscr{P}\otimes \mathscr{Z}$-measurable real-valued  function $g$ that satisfy
%$$
%\int_0^T \int_Z |g_t(z)|^2\nu(dz)dt < \infty
%$$
%almost surely, the following holds,
%\begin{align*}
%E\sup_{0 \leq t \leq T}\Big|\int_0^t\int_Z g_s(z)&\tilde{N}(ds,dz)\Big|^r \leq  K E\Big(\int_0^T\int_Z |g_t(z)|^2 \nu(dz)dt\Big)^\frac{r}{2}
%\\
%& +K E\int_0^T\int_Z |g_t(z)|^r \nu(dz)dt.
%\end{align*}
%If $1\leq r \leq 2$, then the second term on the right hand side can be dropped.
%\end{lem}
%-----------------------------------------------------------
%-----------------------------------------------------------
The moment bound of SDE \eqref{eq:sderc} is well know, but for the completeness of the article, we prove this in the following lemma.
\begin{lem} \label{lem:mb:rc}
%---------------------------------------------------
Let Assumptions A-\ref{as:sderc:initial:value} to A-\ref{as:sderc:continuity}  be satisfied, then there exists a unique solution $\{x_t\}_{t\in [t_0,t_1]}$ of SDE \eqref{eq:sderc}. Moreover,
$$
\sup_{t_0 \leq t \leq t_1}E|x_t|^{p_0} \leq K,
$$
where $K$ is a positive constant.
\end{lem}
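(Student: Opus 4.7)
The plan is to establish existence and uniqueness first by invoking the standard truncation machinery for L\'evy-driven SDEs under local monotonicity, and then to prove the $p_0$-th moment bound via It\^o's formula applied to $x \mapsto |x|^{p_0}$ combined with a localization argument. For existence and uniqueness I would, for each $R>0$, replace the coefficients by their projections onto the ball of radius $R$ (so that by Remark \ref{rem:rc:local:bound:sig:gam} and Assumption A-\ref{as:convergence:localbound} the truncated coefficients are bounded and locally regular), obtain a strong solution $x^R$ by classical theory, and use Assumption A-\ref{as:convergence:localmonotonicity} together with It\^o's formula applied to $|x^R-x^{R'}|^2$ to patch the family $\{x^R\}$ into a single global solution; coercivity A-\ref{as:sderc:coercivity} ensures the exit times $\tau_R = \inf\{t: |x^R_t| \geq R\}$ converge to $t_1$ almost surely once the moment bound is established, and continuity A-\ref{as:sderc:continuity} allows passage to the limit $R\to\infty$ in the integral equation.

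For the moment bound, I would apply It\^o's formula to $f(x)=|x|^{p_0}$, using $\nabla f(x) = p_0 |x|^{p_0-2}x$ and $\nabla^2 f(x) = p_0|x|^{p_0-2}I + p_0(p_0-2)|x|^{p_0-4}xx^{\top}$. The resulting continuous-part drift density equals
\begin{equation*}
p_0 |x_s|^{p_0-2} x_s b_s(x_s) + \tfrac{p_0}{2} |x_s|^{p_0-2} |\sigma_s(x_s)|^2 + \tfrac{p_0(p_0-2)}{2} |x_s|^{p_0-4} |x_s^{*}\sigma_s(x_s)|^2.
\end{equation*}
Since $p_0\geq 2$, the bound $|x^{*}\sigma|^2\leq |x|^2|\sigma|^2$ consolidates this into $\tfrac{p_0}{2} |x_s|^{p_0-2}\bigl[2x_sb_s(x_s)+(p_0-1)|\sigma_s(x_s)|^2\bigr]$, and Assumption A-\ref{as:sderc:coercivity} reduces it to $\tfrac{p_0 L}{2}|x_s|^{p_0-2}(M+|x_s|^2)$; Young's inequality then yields the estimate $K(M^{p_0/2} + |x_s|^{p_0})$.

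The jump part produces a compensator term with integrand $|x_s+\gamma_s(x_s,z)|^{p_0} - |x_s|^{p_0} - p_0|x_s|^{p_0-2} x_s\cdot\gamma_s(x_s,z)$, for which I would invoke the elementary Taylor-type inequality
\begin{equation*}
\bigl||x+\gamma|^{p_0} - |x|^{p_0} - p_0|x|^{p_0-2} x\cdot\gamma\bigr| \leq K\bigl(|x|^{p_0-2}|\gamma|^2 + |\gamma|^{p_0}\bigr).
\end{equation*}
Integrating against $\nu(dz)$ and applying Assumption A-\ref{as:sderc:coercivity} to $\int_Z|\gamma|^2\nu(dz)$ and Assumption A-\ref{as:sderc:gamma:growth} to $\int_Z|\gamma|^{p_0}\nu(dz)$, followed once more by Young's inequality, gives an overall bound of the form $K(M^{p_0/2} + N + |x_s|^{p_0})$.

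To discard the Wiener and compensated-Poisson martingale terms rigorously, I would introduce the stopping times $\tau_R = \inf\{t\in[t_0,t_1]:|x_t|\geq R\}\wedge t_1$, stop the It\^o identity at $t\wedge\tau_R$, take expectations (so both martingale terms vanish by boundedness before the stopping time), and arrive at
\begin{equation*}
E|x_{t\wedge\tau_R}|^{p_0} \leq E|x_{t_0}|^{p_0} + K\,E\int_{t_0}^{t_1}(M^{p_0/2}+N)\,ds + K\int_{t_0}^{t}E|x_{s\wedge\tau_R}|^{p_0}\,ds.
\end{equation*}
The right-hand-side constant term is finite by Assumption A-\ref{as:sderc:initial:value}, by $M\in L^{p_0/2}(\Omega)$, and by $N\in L(\Omega)$, so Gronwall's lemma produces an $R$-independent bound; letting $R\to\infty$ and invoking Fatou's lemma completes the proof. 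The main obstacle I anticipate is the jump compensator estimate: the Taylor remainder must be dominated simultaneously by the two $\nu$-integrals of $|\gamma|^2$ and $|\gamma|^{p_0}$, which is precisely why Assumption A-\ref{as:sderc:gamma:growth} is imposed as a separate hypothesis from the coercivity A-\ref{as:sderc:coercivity} once $p_0>2$.
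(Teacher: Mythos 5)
Your moment-bound argument is essentially identical to the paper's: It\^o's formula for $|x|^{p_0}$, consolidation of the diffusion terms via $|\sigma^*x|^2\le|x|^2|\sigma|^2$ into the coercivity expression of Assumption A-\ref{as:sderc:coercivity}, the Taylor-remainder estimate for the jump compensator controlled by Assumptions A-\ref{as:sderc:coercivity} and A-\ref{as:sderc:gamma:growth}, followed by stopping at $\tau_R$, Gronwall and Fatou. The only divergences are cosmetic: for existence and uniqueness the paper simply cites Gy\"ongy--Krylov (1980) rather than running the truncation construction you sketch, and it treats the jump remainder separately for $p_0=2$ and $p_0\ge 4$, whereas your single Taylor-type inequality covers all $p_0\ge 2$ uniformly.
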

%---------------------------------------------
\begin{proof}
%---------------------------------------------
The proof of existence and uniqueness of the solution of SDE \eqref{eq:sderc} can be found in \cite{gyongy1980} under more general settings than those considered here.

Define a stopping time $\tilde{\tau}_R:= \inf\{t \geq t_0: |x_t| >R\}\wedge t_1$ and notice that $|x_{t-}| \leq R$ for any $ t_0 \leq t \leq \tilde{\tau}_R$.  By using  It\^o's formula,
\begin{align} \label{eq:sderc:ito}
|x_t|^{p_0} &= |x_{t_0}|^{p_0}+ {p_0} \int_{t_0}^{t} |x_s|^{p_0-2} x_s b_s( x_s) ds + p_0\int_{t_0}^{t} |x_s|^{p_0-2} x_s \sigma_s( x_s)  dw_s  \notag
\\
& + \frac{p_0(p_0-2)}{2} \int_{t_0}^{t} |x_s|^{p_0-4}|\sigma_s^{*}(x_s) x_s|^2ds +\frac{p_0}{2}\int_{t_0}^{t} |x_s|^{p_0-2}|\sigma_s(x_s)|^2 ds  \notag
\\
&+  p_0\int_{t_0}^{t} \int_{Z} |x_s|^{p_0-2} x_{s} \gamma_s( x_{s},z)    \tilde N(ds,dz) \notag
\\
+\int_{t_0}^{t} &\int_{Z}\{ |x_{s}+\gamma_s( x_{s},z)|^{p_0}-|x_{s}|^{p_0}-p_0|x_{s}|^{p_0-2} x_{s}\gamma_s( x_{s},z) \}N(ds,dz)
\end{align}
almost surely for any $t \in [t_0, t_1]$. Now, on taking expectation and using Schwarz inequality, one obtains,
\begin{align} \label{eq:thirdterm}
E|x_{t \wedge \tilde{\tau}_R}|^{p_0} &\leq  E|x_{t_0}|^{p_0}+ \frac{p_0}{2} E\int_{t_0}^{t \wedge \tilde{\tau}_R} |x_s|^{p_0-2} \{2x_s b_s( x_s)  + (p_0-1)|\sigma_s(x_s)|^2\}ds \notag
\\
+E\int_{t_0}^{t \wedge \tilde{\tau}_R} \hspace{-2mm}&\int_{Z}\{ |x_{s}\hspace{-.5mm}+\hspace{-.5mm}\gamma_s( x_{s},z)|^{p_0}\hspace{-.5mm}-\hspace{-.5mm}|x_{s}|^{p_0}\hspace{-.5mm}-\hspace{-.5mm}p_0|x_{s}|^{p_0-2} x_{s}\gamma_s( x_{s},z) \}\nu(dz)ds
\end{align}
for any $t \in [t_0, t_1]$. One notes that when $p_0=2$, then
\begin{align*}
E|x_{t \wedge \tilde{\tau}_R}|^{2} \leq  E|x_{t_0}|^{2}\hspace{-.5mm}+\hspace{-.5mm}  E\int_{t_0}^{t \wedge \tilde{\tau}_R}  \{2x_s b_s( x_s) \hspace{-.5mm} + \hspace{-.5mm} |\sigma_s(x_s)|^2\hspace{-.5mm}+\hspace{-.5mm}\int_{Z}|\gamma_s( x_{s},z)|^{2}\nu(dz)\}ds \notag
\end{align*}
for any $t \in [t_0,t_1]$. Thus, the application of Assumption A-\ref{as:sderc:coercivity}, Gronwall's inequality and Fatou's lemma completes the proof for the case $p_0=2$. For the case $p_0 \geq 4$,  one uses the formula for the remainder  and obtains the following estimates,
\begin{align}
E&|x_{t \wedge \tilde{\tau}_R}|^{p_0} \leq  E|x_{t_0}|^{p_0}+ \frac{p_0}{2} E\int_{t_0}^{t \wedge \tilde{\tau}_R} |x_s|^{p_0-2} \big\{2x_s b_s( x_s)  + (p_0-1)|\sigma_s(x_s)|^2\big\}ds \notag
\\
&\qquad +KE\int_{t_0}^{t \wedge \tilde{\tau}_R} \int_{Z} |x_{s}|^{p_0-2}|\gamma_s( x_{s},z)|^2\nu(dz)ds \notag
\\
&\qquad +KE\int_{t_0}^{t \wedge \tilde{\tau}_R} \int_{Z} |\gamma_s( x_{s},z)|^{p_0}\nu(dz)ds \notag
\end{align}
for any $t \in[t_0,t_1]$. On the application of Assumptions A-\ref{as:sderc:coercivity} and A-\ref{as:sderc:gamma:growth}, one obtains,
\begin{align}
\sup_{t_0 \leq t \leq u}E|x_{t \wedge \tilde{\tau}_R}|^{p_0} &\leq  E|x_{t_0}|^{p_0}+ K+K \int_{t_0}^{u} \sup_{t_0 \leq r \leq s}E|x_{r  \wedge \tilde{\tau}_R}|^{p_0} ds  < \infty \notag
\end{align}
for any $u \in[t_0,t_1]$. Hence, the application of Gronwall's lemma and Fatou's lemma completes the proof.
\end{proof}
%---------------------------------------------------------
Before proving the moment bound of the scheme \eqref{eq:esrc}, we prove  the following lemma.
%---------------------------------------------------------
\begin{lem} \label{lem:rc:one-step}
Let Assumptions B-\ref{as:esrc:coercivity} to B-\ref{as:esrc:taming} be satisfied. Then, for every $ \rho \in (2, p_0]$, the following holds
\begin{align}
E( |x_t^n-x_{\kappa(n,t)}^n|^{\rho}|\mathscr{F}_{\kappa(n,t)}) \leq K(n^{-\frac{\rho}{4}}( |{M}^n|^\frac{\rho}{2} +|x_{\kappa(n,t)}^n|^{\rho})+n^{-1}( N^n+|x_{\kappa(n,t)}^n|^{\rho})) \notag
\end{align}
almost surely and for every $\rho \in [1, 2]$, the following holds
\begin{align*}
E\big( |x_t^n-x_{\kappa(n,t)}^n|^{\rho}|\mathscr{F}_{\kappa(n,t)}\big)  &\leq Kn^{-\frac{\rho}{4}}(|M^n|^\frac{\rho}{2}+|x_{\kappa(n,t)}^n|^{\rho})
\end{align*}
almost surely for any $t \in [t_0, t_1]$, where the positive constant $K$ does not depend on $n$.
\end{lem}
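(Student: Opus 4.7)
The plan is to decompose the one-step increment from \eqref{eq:esrc} into its three natural components. Writing $\tau := \kappa(n,t)$ and noting that $\kappa(n,s)=\tau$ for every $s \in [\tau,t]$, one has
\begin{equation*}
x_t^n - x_\tau^n = \int_\tau^t b_s^n(x_\tau^n)\,ds + \int_\tau^t \sigma_s^n(x_\tau^n)\,dw_s + \int_\tau^t\!\int_Z \gamma_s^n(x_\tau^n,z)\,\tilde{N}(ds,dz).
\end{equation*}
After the elementary inequality $|a+b+c|^\rho \leq K(|a|^\rho+|b|^\rho+|c|^\rho)$ and taking the conditional expectation with respect to $\mathscr{F}_\tau$, the three summands may be treated independently, because $x_\tau^n$ is $\mathscr{F}_\tau$-measurable.

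For $\rho \in (2,p_0]$ I would bound the drift by H\"older in time together with B-\ref{as:esrc:taming}, producing a contribution of order $n^{-3\rho/4}(|M^n|^{\rho/2}+|x_\tau^n|^\rho)$ that is absorbed into the stated $n^{-\rho/4}$ term. The Wiener integral is handled via the conditional Burkholder-Davis-Gundy inequality applied to $\bigl(\int_\tau^t |\sigma_s^n(x_\tau^n)|^2\,ds\bigr)^{\rho/2}$ together again with B-\ref{as:esrc:taming}, producing precisely $Kn^{-\rho/4}(|M^n|^{\rho/2}+|x_\tau^n|^\rho)$. For the compensated Poisson integral, Kunita's inequality (valid for $\rho \geq 2$) controls the conditional $\rho$-th moment by the sum of $K\bigl(\int_\tau^t \int_Z |\gamma_s^n|^2\,\nu(dz)\,ds\bigr)^{\rho/2}$ and $K\int_\tau^t \int_Z |\gamma_s^n|^\rho\,\nu(dz)\,ds$; the former is bounded via B-\ref{as:esrc:coercivity} by $Kn^{-\rho/2}(|M^n|^{\rho/2}+|x_\tau^n|^\rho)$ and absorbed.

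The main obstacle is controlling $\int_Z |\gamma_s^n(x_\tau^n,z)|^\rho\,\nu(dz)$ for $\rho \in (2,p_0)$, since B-\ref{as:esrc:coercivity} and B-\ref{as:esrc:gamma:growth} supply this only at the endpoints. The key idea is to interpolate: setting $a=(p_0-\rho)/(p_0-2)$ and $b=(\rho-2)/(p_0-2)$ so that $a+b=1$ and $2a+p_0 b = \rho$, H\"older yields
\begin{equation*}
\int_Z |\gamma_s^n|^\rho\,\nu(dz) \leq \Bigl(\int_Z |\gamma_s^n|^2\,\nu(dz)\Bigr)^{a}\Bigl(\int_Z |\gamma_s^n|^{p_0}\,\nu(dz)\Bigr)^{b} \leq L(M^n+|x_\tau^n|^2)^a (N^n+|x_\tau^n|^{p_0})^b.
\end{equation*}
Using subadditivity $(u+v)^\theta \leq u^\theta+v^\theta$ for $\theta\in(0,1)$ to split into four cross terms, and then Young's inequality with exponents chosen so that each $|x_\tau^n|$-factor produces exactly the power $\rho$ (the identity $2a+p_0 b=\rho$ is what makes this match), every cross term is dominated by $K\bigl((M^n)^{\rho/2}+N^n+|x_\tau^n|^\rho\bigr)$. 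Integrating over $[\tau,t]$ then yields the $Kn^{-1}(N^n+|x_\tau^n|^\rho)$ contribution, with the $(M^n)^{\rho/2}$ pieces merging harmlessly into the $n^{-\rho/4}|M^n|^{\rho/2}$ term already isolated.

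For $\rho \in [1,2]$, the conditional Jensen inequality reduces the claim to the $\rho=2$ case via $E(|x_t^n-x_\tau^n|^\rho|\mathscr{F}_\tau) \leq \bigl(E(|x_t^n-x_\tau^n|^2|\mathscr{F}_\tau)\bigr)^{\rho/2}$. The $\rho=2$ bound follows from the same decomposition but with the It\^o isometry (rather than Kunita) on the jump term, so that only the $\int|\gamma|^2\nu$ contribution appears and is absorbed by B-\ref{as:esrc:coercivity}; this gives $Kn^{-1/2}(M^n+|x_\tau^n|^2)$, and raising to the power $\rho/2$ together with subadditivity produces the claimed $Kn^{-\rho/4}(|M^n|^{\rho/2}+|x_\tau^n|^\rho)$.
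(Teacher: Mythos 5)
Your proof is correct and follows essentially the same route as the paper's: decompose the increment via \eqref{eq:esrc}, apply H\"older in time to the drift and conditional moment inequalities (BDG/Kunita) to the two stochastic integrals, then invoke Assumptions B-\ref{as:esrc:coercivity}, B-\ref{as:esrc:gamma:growth} and B-\ref{as:esrc:taming}. Your H\"older interpolation of $\int_Z|\gamma_s^n|^\rho\,\nu(dz)$ between the exponents $2$ and $p_0$ is a worthwhile detail that the paper leaves implicit, and your Jensen reduction for $\rho\in[1,2]$ is an acceptable variant of the paper's observation that the $\int_Z|\gamma_s^n|^\rho\nu(dz)$ term can simply be dropped in that range.
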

%---------------------------------------------------------
\begin{proof}
By equation \eqref{eq:esrc}, one obtains
\begin{align*}
E\big( |x_t^n&-x_{\kappa(n,t)}^n|^{\rho}|\mathscr{F}_{\kappa(n,t)}\big)  \leq K E\Big(\Big|\int_{\kappa(n,t)}^{t} b_s^n(x_{\kappa(n,s)}^n) ds\Big|^{\rho}|\mathscr{F}_{\kappa(n,t)}\Big)
\\
& +K E\Big(\Big|\int_{\kappa(n,t)}^{t} \sigma_s^n(x_{\kappa(n,s)}^n) dw_s\Big|^{\rho} |\mathscr{F}_{\kappa(n,t)}\Big)
\\
& +KE\Big(\Big|\int_{\kappa(n,t)}^{t} \int_Z \gamma_s^n(x_{\kappa(n,s)}^n, z) \tilde{N}(ds, dz)\Big|^{\rho}|\mathscr{F}_{\kappa(n,t)}\Big)
\end{align*}
which on the application of H\"older's inequality and an elementary inequality of stochastic integrals gives,
\begin{align*}
E\big( |x_t^n&-x_{\kappa(n,t)}^n|^{\rho}|\mathscr{F}_{\kappa(n,t)}\big)  \leq K n^{-\rho+1}E\Big(\Big(\int_{\kappa(n,t)}^{t} |b_s^n(x_{\kappa(n,s)}^n)|^{\rho} ds\Big)|\mathscr{F}_{\kappa(n,t)}\Big)
\\
& +K E\Big(\Big(\int_{\kappa(n,t)}^{t} |\sigma_s^n(x_{\kappa(n,s)}^n)|^{2} ds \Big)^\frac{\rho}{2}|\mathscr{F}_{\kappa(n,t)}\Big)
\\
& +KE\Big(\Big(\int_{\kappa(n,t)}^{t} \int_Z |\gamma_s^n(x_{\kappa(n,s)}^n, z)|^2 \nu(dz)ds \Big)^\frac{\rho}{2}|\mathscr{F}_{\kappa(n,t)}\Big)
\\
& +KE\Big(\int_{\kappa(n,t)}^{t} \int_Z |\gamma_s^n(x_{\kappa(n,s)}^n, z)|^\rho \nu(dz)ds|\mathscr{F}_{\kappa(n,t)}\Big)
\end{align*}
for any $t \in [t_0, t_1]$. Notice that when $\rho \in [1,2]$, then the last term on the right hand side of the above inequality can be dropped. Furthermore, one uses Assumptions B-\ref{as:esrc:coercivity}, B-\ref{as:esrc:gamma:growth} and B-\ref{as:esrc:taming} to complete the proof.
\end{proof}
%---------------------------------------------------------
\begin{lem} \label{lem:esrc:momentbound}
Let Assumptions B-\ref{as:esrc:initial} to B-\ref{as:esrc:taming} be satisfied, then the following holds
$$
\sup_{n \in \mathbb{N}}\sup_{t_0 \leq t \leq t_1}E|x_t^n|^{p_0} \leq K,
$$
where $K$ is a positive constant and does not depend on $n$.
\end{lem}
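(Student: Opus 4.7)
The plan is to mimic the Itô-expansion argument of Lemma \ref{lem:mb:rc} applied to $|x_t^n|^{p_0}$ via the scheme \eqref{eq:esrc}, localized by the stopping time $\tau_R^n := \inf\{t \geq t_0 : |x_t^n| > R\} \wedge t_1$. After taking expectation, the Wiener and compensated Poisson martingales vanish, producing
\[
E|x_{t\wedge\tau_R^n}^n|^{p_0} \leq E|x_{t_0}^n|^{p_0} + \tfrac{p_0}{2}\,E\!\!\int_{t_0}^{t\wedge\tau_R^n}\!\!|x_s^n|^{p_0-2}\Bigl[2x_s^n b_s^n(x_{\kappa(n,s)}^n) + (p_0-1)|\sigma_s^n(x_{\kappa(n,s)}^n)|^2\Bigr]ds + J_t,
\]
where $J_t$ is the jump remainder. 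I would dispose of $J_t$ exactly as in Lemma \ref{lem:mb:rc}, via the standard inequality $|a+b|^{p_0}-|a|^{p_0}-p_0|a|^{p_0-2}ab \leq K(|a|^{p_0-2}|b|^2+|b|^{p_0})$, then control the resulting integrals by B-\ref{as:esrc:coercivity} and B-\ref{as:esrc:gamma:growth}, which yield bounds of the form $K\,E\!\int|x_s^n|^{p_0-2}(M^n+|x_{\kappa(n,s)}^n|^2)\,ds + K\,E\!\int(N^n+|x_{\kappa(n,s)}^n|^{p_0})\,ds$.

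The central obstacle is that in the main Itô bracket $b_s^n$ and $\sigma_s^n$ are evaluated at $x_{\kappa(n,s)}^n$, whereas they multiply $x_s^n$ and $|x_s^n|^{p_0-2}$. I would split $x_s^n = x_{\kappa(n,s)}^n + \Delta_s^n$ with $\Delta_s^n := x_s^n - x_{\kappa(n,s)}^n$. The part $2x_{\kappa(n,s)}^n b_s^n(x_{\kappa(n,s)}^n)+(p_0-1)|\sigma_s^n|^2$ is immediately absorbed by B-\ref{as:esrc:coercivity} into $L(M^n+|x_{\kappa(n,s)}^n|^2)$. The dangerous residual is the cross term
\[
p_0\,E\!\int_{t_0}^{t\wedge\tau_R^n}|x_s^n|^{p_0-2}\,\Delta_s^n\, b_s^n(x_{\kappa(n,s)}^n)\,ds,
\]
which cannot be killed by conditioning on $\mathscr F_{\kappa(n,s)}$ because $b_s^n$ is only predictable and not $\mathscr F_{\kappa(n,s)}$-measurable in the random-coefficient setting (precisely the obstruction flagged in Remark \ref{rem:about:improv}).

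To tame this term I would apply Young's inequality with weight $n^{1/2}$:
\[
2|\Delta_s^n|\,|b_s^n(x_{\kappa(n,s)}^n)| \leq n^{-1/2}|b_s^n(x_{\kappa(n,s)}^n)|^2 + n^{1/2}|\Delta_s^n|^2.
\]
The first summand is bounded pointwise by $L(M^n+|x_{\kappa(n,s)}^n|^2)$ thanks to the taming B-\ref{as:esrc:taming}. For the second, I would use $|x_s^n|^{p_0-2} \leq K(|x_{\kappa(n,s)}^n|^{p_0-2}+|\Delta_s^n|^{p_0-2})$ to extract the non-measurable factor, reducing matters to
\[
K\,n^{1/2}\,|x_{\kappa(n,s)}^n|^{p_0-2}|\Delta_s^n|^2 \ \text{and}\ K\,n^{1/2}|\Delta_s^n|^{p_0}.
\]
Conditioning on $\mathscr F_{\kappa(n,s)}$ and invoking Lemma \ref{lem:rc:one-step} at $\rho=2$ and $\rho=p_0$, the extra $n^{1/2}$ is exactly swallowed by the $n^{-1/2}$ (respectively $n^{-p_0/4}\leq n^{-1/2}$ and $n^{-1}$) coming from the one-step estimate, leaving a $K$-multiple of $M^n + (M^n)^{p_0/2} + N^n + |x_{\kappa(n,s)}^n|^{p_0}$, which is integrable uniformly in $n$ by B-\ref{as:esrc:coercivity} and B-\ref{as:esrc:gamma:growth}.

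Collecting all contributions and using repeated Young's inequalities to absorb mixed terms like $|x_s^n|^{p_0-2}|x_{\kappa(n,s)}^n|^2$ and $M^n|x_s^n|^{p_0-2}$ into $|x_s^n|^{p_0}+|x_{\kappa(n,s)}^n|^{p_0}+(M^n)^{p_0/2}$, and noting that $E|x_{t_0}^n|^{p_0}$ is uniformly bounded by B-\ref{as:esrc:initial}, one obtains
\[
\sup_{t_0\leq r\leq t} E|x_{r\wedge\tau_R^n}^n|^{p_0} \leq K + K\!\int_{t_0}^{t}\sup_{t_0\leq r\leq s}E|x_{r\wedge\tau_R^n}^n|^{p_0}\,ds
\]
with $K$ independent of both $n$ and $R$. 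Gronwall's lemma followed by Fatou (letting $R\to\infty$) then delivers the claim. The main obstacle is the cross term just discussed; the decisive structural fact is that the $n^{1/4}$-growth afforded by the taming condition B-\ref{as:esrc:taming} is exactly compensated by the $n^{-1/4}$-smallness of the one-step increments in Lemma \ref{lem:rc:one-step}, and this balance is precisely what the Young weight $n^{1/2}$ exploits.
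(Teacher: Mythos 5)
Your argument is correct and follows essentially the same route as the paper: It\^o's formula for $|x_t^n|^{p_0}$, isolation of the cross term $|x_s^n|^{p_0-2}(x_s^n-x_{\kappa(n,s)}^n)b_s^n(x_{\kappa(n,s)}^n)$, and its control by playing the taming condition B-\ref{as:esrc:taming} against the one-step estimate of Lemma \ref{lem:rc:one-step} after conditioning on $\mathscr{F}_{\kappa(n,s)}$ and splitting $|x_s^n|^{p_0-2}\leq K(|x_{\kappa(n,s)}^n|^{p_0-2}+|x_s^n-x_{\kappa(n,s)}^n|^{p_0-2})$. The paper merely distributes the powers of $n$ slightly differently: it bounds $|b_s^n(x)|\leq Kn^{1/4}(|M^n|^{1/2}+|x|)$ directly and invokes Lemma \ref{lem:rc:one-step} at $\rho=p_0-1$ and $\rho=1$, whereas your Young weighting with $n^{1/2}$ leads to $\rho=2$ and $\rho=p_0$; both balance the same exponents. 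One small point of care: if you localize with $\tau_R^n$ throughout, the indicator $I_{\{s\leq\tau_R^n\}}$ is $\mathscr{F}_s$- but not $\mathscr{F}_{\kappa(n,s)}$-measurable, so the conditioning step needs a word of justification (e.g.\ bound the indicator by $1$ or by the $\mathscr{F}_{\kappa(n,s)}$-measurable $I_{\{\tau_R^n\geq\kappa(n,s)\}}$); the paper sidesteps this by first establishing an a priori, possibly $n$-dependent, finiteness bound and then running the unlocalized Gronwall argument to extract a constant independent of $n$.
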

%--------------------------------------------------------
\begin{proof}
For every $n \in \mathbb{N}$, one applies the It\^{o}'s formula to obtain,
\begin{align} \label{eq:esrc:ito}
|x_t^n|^{p_0} &= |x_{t_0}^n|^{p_0}+{p_0} \int_{t_0}^{t} |x_s^n|^{p_0-2} x_s^n b_s^n( x_{\kappa(n,s)}^n) ds \notag
\\
&+ p_0\int_{t_0}^{t} |x_s^n|^{p_0-2} x_s^n \sigma_s^n( x_{\kappa(n,s)}^n)dw_s  \notag
\\
& + \frac{p_0(p_0-2)}{2} \int_{t_0}^{t} |x_s^n|^{p_0-4}|\sigma_s^{n*}( x_{\kappa(n,s)}^n) x_s^n|^2ds \notag
\\
&+\frac{p_0}{2}\int_{t_0}^{t} |x_s^n|^{p_0-2}|\sigma_s^n( x_{\kappa(n,s)}^n)|^2 ds  \notag
\\
&+  p_0\int_{t_0}^{t} \int_{Z} |x_s^n|^{p_0-2} x_{s}^n \gamma_s^n( x_{\kappa(n,s)}^n,z)    \tilde N(ds,dz) \notag
\\
+\int_{t_0}^{t} &\int_{Z} \hspace{-1mm}\{ |x_{s}^n+\gamma_s^n( x_{\kappa(n,s)}^n,z)|^{p_0} \hspace{-1mm}-\hspace{-1mm}|x_{s}^n|^{p_0}\hspace{-1mm}-\hspace{-1mm}p_0|x_{s}^n|^{p_0-2} x_{s}^n\gamma_s^n(  x_{\kappa(n,s)}^n,z) \}N(ds,dz)
\end{align}
almost surely for any $t \in [t_0, t_1]$. The last term on the right hand side of the above equation can be estimated by the formula for the remainder as before. Hence, on taking expectation and using Schwarz inequality, one obtains the following estimates,
\begin{align}
E|x_t^n|^{p_0} &\leq E|x_{t_0}^n|^{p_0}+p_0 E\int_{t_0}^{t} |x_s^n|^{p_0-2} (x_s^n-x_{\kappa(n,s)}^n) b_s^n( x_{\kappa(n,s)}^n) ds   \notag
\\
&+\frac{p_0}{2} E\int_{t_0}^{t} |x_s^n|^{p_0-2} \big\{2x_{\kappa(n,s)}^n b_s^n( x_{\kappa(n,s)}^n) +(p_0-1) |\sigma_s^{n}( x_{\kappa(n,s)}^n)|^2\}ds \notag
\\
&+K E\int_{t_0}^{t} \int_{Z} |x_{s}^n|^{p_0-2}|\gamma_s^n( x_{\kappa(n,s)}^n,z)|^2 \nu(dz)ds \notag
\\
&+K E\int_{t_0}^{t} \int_{Z} |\gamma_s^n( x_{\kappa(n,s)}^n,z)|^{p_0} \nu(dz)ds \label{eq:esrc:for:remark}
\end{align}
which due to Schwarz inequality, Assumptions B-\ref{as:esrc:coercivity}, B-\ref{as:esrc:gamma:growth} and B-\ref{as:esrc:taming} yields,
\begin{align}
E|x_t^n|^{p_0} &\leq E|x_{t_0}^n|^{p_0}+K n^\frac{1}{4} E\int_{t_0}^{t} |x_s^n|^{p_0-2} |x_s^n-x_{\kappa(n,s)}^n| (|{M}^n|^\frac{1}{2}+| x_{\kappa(n,s)}^n|) ds   \notag
\\
&+K E\int_{t_0}^{t} |x_s^n|^{p_0-2} ({M}^n+|x_{\kappa(n,s)}^n|^2 )ds+K E\int_{t_0}^{t}  ({N}^n+| x_{\kappa(n,s)}^n|^{p_0})ds \notag
\end{align}
for any $t \in [t_0, t_1]$. Moreover, one uses Young's inequality and an algebraic inequality to obtain the following estimates,
\begin{align}
E|x_t^n|^{p_0} &\leq E|x_{t_0}^n|^{p_0}+K n^\frac{1}{4} E\int_{t_0}^{t}  |x_s^n-x_{\kappa(n,s)}^n|^{p_0-1} (|{M}^n|^\frac{1}{2}+| x_{\kappa(n,s)}^n|) ds   \notag
\\
&+K n^\frac{1}{4} E\int_{t_0}^{t}  |x_{\kappa(n,s)}^n|^{p_0-2}|x_s^n-x_{\kappa(n,s)}^n|(|{M}^n|^\frac{1}{2}+| x_{\kappa(n,s)}^n|) ds   \notag
\\
&+K+ K \int_{t_0}^{t} E|x_s^n|^{p_0} ds + K \int_{t_0}^{t} E|x_{\kappa(n,s)}^n|^{p_0} ds  \notag
\end{align}
for any $t \in [t_0, t_1]$. Also, one notices that for $p_0=2$, the second and third terms on the right hand side of the above inequality  are same which can be kept in mind in the following calculations. Moreover, the above can also be written as,
\begin{align}
E|x_t^n&|^{p_0} \leq E|x_{t_0}^n|^{p_0}\hspace{-1mm}+K n^\frac{1}{4} E\hspace{-1mm}\int_{t_0}^{t}  (|M^n|^\frac{1}{2}\hspace{-1mm}+| x_{\kappa(n,s)}^n|)E(|x_s^n\hspace{-1mm}-\hspace{-1mm}x_{\kappa(n,s)}^n|^{p_0-1}|\mathscr{F}_{\kappa(n,s)})  ds   \notag
\\
&+K n^\frac{1}{4} E\int_{t_0}^{t}  |x_{\kappa(n,s)}^n|^{p_0-2}(|M^n|^\frac{1}{2}+| x_{\kappa(n,s)}^n|) E(|x_s^n-x_{\kappa(n,s)}^n||\mathscr{F}_{\kappa(n,s)}) ds   \notag
\\
&+K+ K \int_{t_0}^{t} \sup_{t_0 \leq r \leq s} E|x_r^n|^{p_0} ds   \notag
\end{align}
for any $t \in [t_0, t_1]$. Notice that when $p_0 \in [2,3]$, then one uses the case $\rho \in [1,2]$ in Lemma \ref{lem:rc:one-step} which gives the rate $n^{-\rho/4}$ and hence $n^{1/4}$ disappears from the second and third terms. When $p_0 \geq 3$, then the rate is $n^{-1}$ which cancels out $n^{1/4}$ in the second term. As a consequence, one obtains
\begin{align*}
\sup_{t_0 \leq t \leq u}E|x_t^n|^{p_0} \leq K+K\int_{t_0}^{u}\sup_{t_0 \leq t \leq s}E|x_r^n|^{p_0} ds < \infty
\end{align*}
for $u \in [t_0, t_1]$ where $K$ does not depend on $n$. The finiteness of the right hand side of the above inequality is guaranteed as one can easily show by adapting similar arguments as those in Lemma \ref{lem:mb:rc} that,
\begin{align*}
\sup_{t_0 \leq t \leq t_1}E|x_t^n|^{p_0} \leq  \tilde{K}
\end{align*}
where a priori it is not clear whether the constant $\tilde{K}$ is independent of $n$ or not. The application of Gronwall's lemma completes the proof.
\end{proof}
\section{Proof of Main Results} \label{sec:rc:proof:rate}
First, we make the following observations.
%-----------------------------------------------------------
\begin{rem} \label{rem:convergence:siggam:con}
Due to Assumptions A-\ref{as:convergence:localmonotonicity} and A-\ref{as:convergence:localbound}, for every $R>0$,
\begin{align*}
|\sigma_t(x)-\sigma_t(\bar{x})|^2+\int_Z |\gamma_t(x,z)-\gamma_t(\bar{x},z)|^2 \nu(dz) \leq {C}(R)(|x-\bar{x}|^2+|x-\bar{x}|)
\end{align*}
almost surely whenever $|x|\vee |\bar{x}| \leq R$ for any $t \in [t_0, t_1]$ and $x,\bar{x} \in \mathbb{R}^d$.
\end{rem}
%-----------------------------------------------------------
For proving Theorem \ref{thm:convergence:rc}, one requires the following result.
\begin{cor} \label{cor:convergence:onestep}
Let Assumptions B-\ref{as:esrc:initial} to B-\ref{as:esrc:taming} be satisfied. Then for any $\rho \in [2,p_0]$, the following holds,
\begin{align*}
\sup_{t_0 \leq t \leq t_1}E|x_t^n-x_{\kappa(n,t)}^n|^\rho \leq K(n^{-\frac{\rho}{4}}+n^{-1})
\end{align*}
and for any $\rho \in [1,2]$, the following holds,
\begin{align*}
\sup_{t_0 \leq t \leq t_1}E|x_t^n-x_{\kappa(n,t)}^n|^\rho \leq Kn^{-\frac{\rho}{4}}
\end{align*}
where $K$ is a positive constant that does not depend on $n$.
\end{cor}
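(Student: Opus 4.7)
The plan is to obtain this corollary by taking unconditional expectation in the conditional estimates of Lemma \ref{lem:rc:one-step} and then bounding the resulting moment terms via Lemma \ref{lem:esrc:momentbound} together with the uniform integrability assumptions B-\ref{as:esrc:coercivity} and B-\ref{as:esrc:gamma:growth}. Thus no new stochastic analysis is required, only a careful bookkeeping of which moments are controlled and verification that the ranges of $\rho$ in the two cases are compatible with the available moment bounds.

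First I would treat the case $\rho \in [1,2]$. Taking tower expectation in the second estimate of Lemma \ref{lem:rc:one-step},
\begin{align*}
E|x_t^n-x_{\kappa(n,t)}^n|^{\rho} \leq K n^{-\rho/4}\bigl(E|M^n|^{\rho/2}+E|x_{\kappa(n,t)}^n|^{\rho}\bigr).
\end{align*}
Since $\rho/2\leq 1\leq p_0/2$, Jensen's inequality together with Assumption B-\ref{as:esrc:coercivity} gives $\sup_n E|M^n|^{\rho/2}\leq K$. Similarly, since $\rho\leq 2\leq p_0$, Jensen's inequality and Lemma \ref{lem:esrc:momentbound} give $\sup_n\sup_{t_0\leq t\leq t_1} E|x_{\kappa(n,t)}^n|^{\rho}\leq K$, and the second claimed bound follows after taking supremum in $t$.

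For $\rho\in(2,p_0]$ I proceed identically with the first estimate of Lemma \ref{lem:rc:one-step}, obtaining
\begin{align*}
E|x_t^n-x_{\kappa(n,t)}^n|^{\rho} \leq K n^{-\rho/4}\bigl(E|M^n|^{\rho/2}+E|x_{\kappa(n,t)}^n|^{\rho}\bigr)+K n^{-1}\bigl(E N^n+E|x_{\kappa(n,t)}^n|^{\rho}\bigr).
\end{align*}
The choice $\rho\leq p_0$ ensures that $\rho/2\leq p_0/2$, so $\sup_n E|M^n|^{\rho/2}\leq K$ by Assumption B-\ref{as:esrc:coercivity}; Assumption B-\ref{as:esrc:gamma:growth} yields $\sup_n E N^n\leq K$; and Lemma \ref{lem:esrc:momentbound} yields $\sup_n\sup_{t_0\leq t\leq t_1}E|x_{\kappa(n,t)}^n|^{\rho}\leq K$. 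Taking supremum in $t$ produces the first claimed bound, and the boundary case $\rho=2$ is subsumed by the $\rho\in[1,2]$ estimate (where only the $n^{-\rho/4}=n^{-1/2}$ term, which dominates $n^{-1}$, appears).

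There is no real obstacle: the corollary is a pure consequence of the two preceding lemmas, and the only subtlety is matching the exponent $\rho/2$ appearing in the $M^n$-moments with the assumption $\{M^n\}\in l_\infty(L^{p_0/2}(\Omega))$, which is precisely why the restriction $\rho\leq p_0$ is imposed.
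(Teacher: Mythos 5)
Your proposal is correct and follows exactly the paper's intended route: the paper's proof of Corollary \ref{cor:convergence:onestep} simply states that the result follows immediately from Lemmas \ref{lem:rc:one-step} and \ref{lem:esrc:momentbound}, and your argument is precisely the careful write-up of that step (tower property, then Jensen plus Assumptions B-\ref{as:esrc:coercivity} and B-\ref{as:esrc:gamma:growth} to bound $E|M^n|^{\rho/2}$ and $EN^n$, and Lemma \ref{lem:esrc:momentbound} for $E|x_{\kappa(n,t)}^n|^{\rho}$).
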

\begin{proof}
The proof follows immediately due to Lemmas [\ref{lem:rc:one-step}, \ref{lem:esrc:momentbound}].
\end{proof}
%---------------------------------------------------------
\begin{proof}[\textit{\textbf{Proof of Theorem \ref{thm:convergence:rc}}}]
For every $n \in \mathbb{N}$ and $R>0$, define the following the stopping times,
\begin{align*}
\tilde{\tau}_R:=\inf\{t \geq t_0: |x_t| \geq &R\}, \bar{\tau}_{nR}:=\inf\{t \geq t_0: |x_t^n| \geq R\}
\\
&\tau_{nR}:= \tilde{\tau}_R \wedge \bar{\tau}_{nR}
\end{align*}
almost surely. Then, one can write,
\begin{align}
\sup_{t_0 \leq t \leq t_1}E|x_t-x_t^n|^2 & \leq \sup_{t_0 \leq t \leq t_1} E|x_t-x_t^n|^2 I_{\{\{\tilde{\tau}_R \leq t_1\}\cup \{\bar{\tau}_{nR} \leq t_1\} \cup\{{C}(R) >f(R)\}\}} \notag
\\
& + \sup_{t_0 \leq t \leq t_1}E|x_t-x_t^n|^2 I_{\{\{\tilde{\tau}_R > t_1\}\cap \{\bar{\tau}_{nR} > t_1\} \cap\{{C}(R) \leq f(R)\}\}} \notag
\\
=:T_1&+T_2. \label{eq:convergence:T1+T2}
\end{align}
For $T_1$, one uses H\"older's inequality and obtains the following,
\begin{align*}
T_1&:=\sup_{t_0 \leq t \leq t_1}E|x_t-x_t^n|^2 I_{\{\tilde{\tau}_R \leq t_1, \bar{\tau}_{nR} \leq t_1, {C}(R) >f(R)\}}
\\
& \leq \Big(\sup_{t_0 \leq t \leq t_1}E|x_t-x_t^n|^{p_0}\Big)^\frac{2}{p_0}\{P(\tilde{\tau}_R \leq t_1, \bar{\tau}_{nR} \leq t_1, {C}(R) >f(R))\}^\frac{p_0-2}{p_0}
\end{align*}
which on the application of Lemmas [\ref{lem:mb:rc},  \ref{lem:esrc:momentbound}] yields,
\begin{align}
T_1 & \leq K\Big(\frac{E|x_{\tilde{\tau}_R}|^{p_0}}{R^{p_0}} +\frac{E|x_{\bar{\tau}_{nR}}^n|^{p_0}}{R^{p_0}}+P(C(R)>f(R))\Big)^\frac{p_0-2}{p_0} \notag
\\
& \leq K\Big(\frac{1}{R^{p_0}}+P({C}(R)>f(R))\Big)^\frac{p_0-2}{p_0} \label{eq:convergence:T1}
\end{align}
for  every $R>0$.

Moreover, one notices that $T_2$ can be estimated by,
\begin{align}
T_2&:=\sup_{t_0 \leq t \leq t_1}E|x_t-x_t^n|^{2}I_{\{\{\tilde{\tau}_R>t_1\}\cap \{ \bar{\tau}_{nR}>t_1\} \cap \{{C}(R) \leq f(R)\}\}} \notag
\\
& \leq  \sup_{t_0 \leq t \leq t_1}E|x_{t\wedge \tau_{nR}}-x_{t\wedge \tau_{nR}}^n|^{2}I_{\{{C}(R) \leq f(R)\}}. \label{eq:convergence:T2}
\end{align}
Also, due to equations \eqref{eq:sderc} and \eqref{eq:esrc},
\begin{align}
x_t-x_t^n&=x_{t_0}-x_{t_0}^n+\int_{t_0}^t \{b_s(x_s)-b_s^n(x_{\kappa(n,s)}^n)\} ds \notag
\\
&+\int_{t_0}^t \{\sigma_s(x_s)-\sigma_s^n(x_{\kappa(n,s)}^n)\} dw_s \notag
\\
&+\int_{t_0}^t \int_Z\{\gamma_s(x_s,z)-\gamma_t^n(x_{\kappa(n,s)}^n,z)\} \tilde{N}(ds, dz) \label{eq:rc:x-xn}
\end{align}
for any $t \in [t_0, t_1]$. Now, one  uses It\^o's formula to obtain the following,
\begin{align}
|x_t-x_t^n|^{2} &= |x_{t_0}-x_{t_0}^n|^{2}+{2} \int_{t_0}^{t}  (x_s-x_s^n) (b_s(x_s)-b_s^n( x_{\kappa(n,s)}^n) )ds \notag
\\
&+ 2\int_{t_0}^{t}  (x_s-x_s^n) \{\sigma_t(x_s)-\sigma_s^n( x_{\kappa(n,s)}^n)\}dw_s \notag
\\
& +\int_{t_0}^{t} |\sigma_t(x_s)-\sigma_s^n( x_{\kappa(n,s)}^n)|^2 ds  \notag
\\
&+  2\int_{t_0}^{t} \int_{Z}  (x_t-x_t^n) \{\gamma_t(x_t,z)-\gamma_s^n( x_{\kappa(n,s)}^n,z)\}    \tilde N(ds,dz) \notag
\\
&+\int_{t_0}^{t} \int_{Z} |\gamma_s(x_s,z)-\gamma_s^n( x_{\kappa(n,s)}^n,z)|^{2} N(ds,dz) \notag
\end{align}
almost surely for any $t \in [t_0, t_1]$. By taking expectation one gets,
\begin{align}
E|x_{t\wedge \tau_{nR}}&-x_{t\wedge \tau_{nR}}^n|^{2}I_{\{C(R) \leq f(R)\}} = E|x_{t_0}-x_{t_0}^n|^{2}I_{\{C(R) \leq f(R)\}} \notag
\\
&+{2} E\int_{t_0}^{t\wedge \tau_{nR}} I_{\{C(R) \leq f(R)\}} (x_s-x_s^n) (b_s(x_s)-b_s^n( x_{\kappa(n,s)}^n) )ds \notag
\\
&+E\int_{t_0}^{t\wedge \tau_{nR}} I_{\{C(R) \leq f(R)\}}|\sigma_s(x_s)-\sigma_s^n( x_{\kappa(n,s)}^n)|^2 ds  \notag
\\
&+E\int_{t_0}^{t\wedge \tau_{nR}} I_{\{C(R) \leq f(R)\}}\int_{Z} |\gamma_s(x_s,z)-\gamma_s^n( x_{\kappa(n,s)}^n,z)|^{2} \nu(dz) ds \notag
\end{align}
which further implies,
\begin{align}
E|x_{t\wedge \tau_{nR}}&-x_{t\wedge \tau_{nR}}^n|^{2}I_{\{C(R) \leq f(R)\}} = E|x_{t_0}-x_{t_0}^n|^{2}I_{\{C(R) \leq f(R)\}} \notag
\\
&+2 E\int_{t_0}^{t\wedge \tau_{nR}}  I_{\{C(R) \leq f(R)\}}(x_s-x_{\kappa(n,s)}^n)(b_s(x_s)-b_s( x_{\kappa(n,s)}^n) ) ds \notag
\\
&+2 E\int_{t_0}^{t\wedge \tau_{nR}}I_{\{C(R) \leq f(R)\}}  (x_s-x_{\kappa(n,s)}^n)(b_s( x_{\kappa(n,s)}^n)-b_s^n( x_{\kappa(n,s)}^n) ) ds \notag
\\
&+ 2 E\int_{t_0}^{t\wedge \tau_{nR}} I_{\{C(R) \leq f(R)\}}(x_{\kappa(n,s)}^n-x_s^n) (b_s(x_s)-b_s( x_{\kappa(n,s)}^n) )ds \notag
\\
&+ 2 E\int_{t_0}^{t\wedge \tau_{nR}} I_{\{C(R) \leq f(R)\}}(x_{\kappa(n,s)}^n-x_s^n) (b_s( x_{\kappa(n,s)}^n)-b_s^n( x_{\kappa(n,s)}^n) )ds \notag
\end{align}
\begin{align*}
&+E\int_{t_0}^{t\wedge \tau_{nR}}I_{\{C(R) \leq f(R)\}} |\sigma_s(x_s)-\sigma_s( x_{\kappa(n,s)}^n)|^2 ds  \notag\\
&+E\int_{t_0}^{t\wedge \tau_{nR}}I_{\{C(R) \leq f(R)\}} |\sigma_s( x_{\kappa(n,s)}^n)-\sigma_s^n( x_{\kappa(n,s)}^n)|^2 ds  \notag
\\
&+E\int_{t_0}^{t\wedge \tau_{nR}} I_{\{C(R) \leq f(R)\}}\int_{Z} |\gamma_s(x_s,z)-\gamma_s( x_{\kappa(n,s)}^n,z)|^{2} \nu(dz) ds \notag
\\
&+E\int_{t_0}^{t\wedge \tau_{nR}}I_{\{C(R) \leq f(R)\}} \int_{Z} |\gamma_s( x_{\kappa(n,s)}^n,z)-\gamma_s^n( x_{\kappa(n,s)}^n,z)|^{2} \nu(dz) ds \notag
\\
&+2E\int_{t_0}^{t\wedge \tau_{nR}}I_{\{C(R) \leq f(R)\}} (\sigma_s(x_s)-\sigma_s( x_{\kappa(n,s)}^n)) \notag
\\
& \qquad \times (\sigma_s( x_{\kappa(n,s)}^n)-\sigma_s^n( x_{\kappa(n,s)}^n)) ds \notag
\\
&+2E\int_{t_0}^{t\wedge \tau_{nR}} I_{\{C(R) \leq f(R)\}}\int_{Z} (\gamma_s(x_s,z)-\gamma_s( x_{\kappa(n,s)}^n,z)) \notag
\\
& \qquad \times (\gamma_s( x_{\kappa(n,s)}^n,z)-\gamma_s^n( x_{\kappa(n,s)}^n,z))\nu(dz) ds \notag
\end{align*}
for any $t \in [t_0, t_1]$. By using Assumption A-\ref{as:convergence:localmonotonicity},  Schwarz's inequality and H\"older's inequality, one obtains the following estimates,
\begin{align}
E|x_{t\wedge \tau_{nR}}&-x_{t\wedge \tau_{nR}}^n|^{2}I_{\{C(R) \leq f(R)\}} \leq  E|x_{t_0}-x_{t_0}^n|^{2} \notag
\\
&+ E\int_{t_0}^{t\wedge \tau_{nR}} I_{\{C(R) \leq f(R)\}} C(R)|x_s-x_{\kappa(n,s)}^n|^2ds \notag
\\
&+8 RE\int_{t_0}^{t\wedge \tau_{nR}}I_{\{C(R) \leq f(R)\}}  |b_s( x_{\kappa(n,s)}^n)-b_s^n( x_{\kappa(n,s)}^n) | ds \notag
\\
&+ 4 E\int_{t_0}^{t\wedge \tau_{nR}} I_{\{C(R) \leq f(R)\}}C(R)|x_s^n-x_{\kappa(n,s)}^n|ds \notag
\\
&+E\int_{t_0}^{t\wedge \tau_{nR}}I_{\{C(R) \leq f(R)\}} |\sigma_s( x_{\kappa(n,s)}^n)-\sigma_s^n( x_{\kappa(n,s)}^n)|^2 ds  \notag
\\
&+E\int_{t_0}^{t\wedge \tau_{nR}}I_{\{C(R) \leq f(R)\}} \int_{Z} |\gamma_s( x_{\kappa(n,s)}^n,z)-\gamma_s^n( x_{\kappa(n,s)}^n,z)|^{2} \nu(dz) ds \notag
\\
&+2E\int_{t_0}^{t\wedge \tau_{nR}}I_{\{C(R) \leq f(R)\}} (2+|\sigma_s(x_s)|^2+|\sigma_s( x_{\kappa(n,s)}^n)|^2) \notag
\\
& \qquad \times |\sigma_s( x_{\kappa(n,s)}^n)-\sigma_s^n( x_{\kappa(n,s)}^n)| ds \notag
\\
&+2E\int_{t_0}^{t\wedge \tau_{nR}} I_{\{C(R) \leq f(R)\}}\Big(\int_{Z} |\gamma_s(x_s,z)|^2 \nu(dz)\Big)^\frac{1}{2} \notag
\\
& \qquad \times \Big(\int_{Z}|\gamma_s( x_{\kappa(n,s)}^n,z)-\gamma_s^n( x_{\kappa(n,s)}^n,z)|^2\nu(dz) \Big)^\frac{1}{2}ds \notag
\\
& +2E\int_{t_0}^{t\wedge \tau_{nR}} I_{\{C(R) \leq f(R)\}}\Big(\int_{Z}|\gamma_s( x_{\kappa(n,s)}^n,z)|^2\nu(dz)\Big)^\frac{1}{2} \notag
\end{align}
\begin{align*}
& \qquad \times \Big(\int_{Z}|\gamma_s( x_{\kappa(n,s)}^n,z)-\gamma_s^n( x_{\kappa(n,s)}^n,z)|^2\nu(dz) \Big)^\frac{1}{2}ds \notag
\end{align*}
which further implies due to Remarks [\ref{rem:rc:local:bound:sig:gam}, \ref{rem:convergence:siggam:con}] that for $u \in [t_0,t_1]$,
\begin{align}
\sup_{t_0 \leq t \leq u}&E|x_{t\wedge \tau_{nR}}-x_{t\wedge \tau_{nR}}^n|^{2} I_{\{C(R) \leq f(R)\}} \leq E|x_{t_0}-x_{t_0}^n|^{2} \notag
\\
&+   2f(R) \int_{t_0}^{u} \sup_{t_0 \leq r \leq s}E|x_{r\wedge \tau_{nR}}-x_{r\wedge \tau_{nR}}^n|^{2}I_{\{C(R) \leq f(R)\}} ds \notag
\\
&+2E\int_{t_0}^{t_1} I_{\{t_0 \leq s \leq \tau_{nR}\}}I_{\{C(R) \leq f(R)\}}C(R)|x_s^n-x_{\kappa(n,s)}^n|^2ds \notag
\\
&+8 RE\int_{t_0}^{t_1}I_{\{t_0 \leq s \leq \tau_{nR}\}}I_{\{C(R) \leq f(R)\}}  |b_s( x_{\kappa(n,s)}^n)-b_s^n( x_{\kappa(n,s)}^n) | ds \notag
\\
&+ 4 E\int_{t_0}^{t_1} I_{\{t_0 \leq s \leq \tau_{nR}\}}I_{\{C(R) \leq f(R)\}}C(R)|x_s^n-x_{\kappa(n,s)}^n|ds \notag
\\
&+E\int_{t_0}^{t_1}I_{\{t_0 \leq s \leq \tau_{nR}\}}I_{\{C(R) \leq f(R)\}} |\sigma_s( x_{\kappa(n,s)}^n)-\sigma_s^n( x_{\kappa(n,s)}^n)|^2 ds  \notag
\\
&+E\int_{t_0}^{t_1}I_{\{t_0 \leq s \leq \tau_{nR}\}}I_{\{C(R) \leq f(R)\}} \int_{Z} |\gamma_s( x_{\kappa(n,s)}^n,z)-\gamma_s^n( x_{\kappa(n,s)}^n,z)|^{2} \nu(dz) ds \notag
\\
&+4E\int_{t_0}^{t_1}I_{\{t_0 \leq s \leq \tau_{nR}\}}I_{\{C(R) \leq f(R)\}} (C(R)+1)  |\sigma_s( x_{\kappa(n,s)}^n)-\sigma_s^n( x_{\kappa(n,s)}^n)| ds \notag
\\
& +2 E\int_{t_0}^{t_1} I_{\{t_0 \leq s \leq \tau_{nR}\}} I_{\{C(R) \leq f(R)\}}\sqrt{C(R)} \notag
\\
& \qquad \times \Big(\int_{Z}|\gamma_s( x_{\kappa(n,s)}^n,z)-\gamma_s^n( x_{\kappa(n,s)}^n,z)|^2\nu(dz) \Big)^\frac{1}{2}ds<\infty \notag
\end{align}
for any $u \in [t_0, t_1]$. On using Gronwall's inequality, the following estimates are obtained,
\begin{align*}
&\sup_{t_0 \leq t \leq t_1}E|x_{t\wedge \tau_{nR}}-x_{t\wedge \tau_{nR}}^n|^{2}I_{\{C(R) \leq f(R)\}} \leq  \exp(f(R)) \Big\{E|x_{t_0}-x_{t_0}^n|^{2} \notag
\\
&+f(R)E\int_{t_0}^{t_1} I_{\{t_0 \leq s \leq \tau_{nR}\}}|x_s^n-x_{\kappa(n,s)}^n|^2ds\notag
\\
&+8 RE\int_{t_0}^{t_1}I_{\{t_0 \leq s \leq \tau_{nR}\}}I_{\{C(R) \leq f(R)\}}  |b_s( x_{\kappa(n,s)}^n)-b_s^n( x_{\kappa(n,s)}^n) | ds \notag
\\
&+ 8f(R) E\int_{t_0}^{t_1} I_{\{t_0 \leq s \leq \tau_{nR}\}}|x_s^n-x_{\kappa(n,s)}^n|ds \notag
\\
&+E\int_{t_0}^{t_1}I_{\{t_0 \leq s \leq \tau_{nR}\}}I_{\{C(R) \leq f(R)\}} |\sigma_s( x_{\kappa(n,s)}^n)-\sigma_s^n( x_{\kappa(n,s)}^n)|^2 ds  \notag
\\
&+E\int_{t_0}^{t_1}I_{\{t_0 \leq s \leq \tau_{nR}\}}I_{\{C(R) \leq f(R)\}} \int_{Z} |\gamma_s( x_{\kappa(n,s)}^n,z)-\gamma_s^n( x_{\kappa(n,s)}^n,z)|^{2} \nu(dz) ds \notag
\end{align*}
\begin{align*}
&+4(f(R)+1) E\int_{t_0}^{t_1}I_{\{t_0 \leq s \leq \tau_{nR}\}}I_{\{C(R) \leq f(R)\}}  |\sigma_s( x_{\kappa(n,s)}^n)-\sigma_s^n( x_{\kappa(n,s)}^n)| ds \notag
\\
& +2\sqrt{f(R)} E\int_{t_0}^{t_1} I_{\{t_0 \leq s \leq \tau_{nR}\}}I_{\{C(R) \leq f(R)\}} \notag
\\
& \qquad \times \Big(\int_{Z}|\gamma_s( x_{\kappa(n,s)}^n,z)-\gamma_s^n( x_{\kappa(n,s)}^n,z)|^2\nu(dz) \Big)^\frac{1}{2}ds \Big\} \notag
\end{align*}
for every $R>0$. Notice that Assumptions A-\ref{as:sderc:initial:value}, B-\ref{as:esrc:initial} and AB-\ref{as:convergence:initial} imply $E|x_{t_0}-x_{t_0}^n|^2 \to 0$ as $n \to \infty$. Hence, on using Corollary \ref{cor:convergence:onestep} and Assumption AB-\ref{as:convergence:con}, one obtains
\begin{align*}
\lim_{n \to \infty}\sup_{t_0 \leq t \leq t_1}&E|x_{t\wedge \tau_{nR}}-x_{t\wedge \tau_{nR}}^n|^{2}I_{\{C(R) \leq f(R)\}} = 0
\end{align*}
i.e. $T_2\to 0$ for every $R>0$. Further, for any given $\epsilon$, one chooses $R>0$ sufficiently large so that $T_1 < \epsilon/2$ (as it is assumed that $\lim_{R\to\infty}P(C(R)>f(R))=0$) and also $n$ large enough so that $T_2 < \epsilon/2$. As a consequence, one obtains
\begin{align*}
\lim_{n \to \infty}\sup_{t_0 \leq t \leq t_1}E|x_t-x_t^n|^2=0
\end{align*}
which implies that the sequence $\{|x_t-x_t^n|\}_{n \in \mathbb{N}}$ converges to $0$ in probability uniformly in  $t$. Moreover, by taking into consideration Lemmas [\ref{lem:mb:rc}, \ref{lem:esrc:momentbound}], the desired result follows.
\end{proof}
We make the following observations.
%---------------------------------------------------------
\begin{rem} \label{rem:rate:sig:gam:plygrowth}
Due to Assumptions B-\ref{as:esrc:coercivity} and B-\ref{as:rate:rc:bn:polygrowth}, there exist constants $L>0$, $\chi>0$ and a sequence of $\mathscr{F}_{t_0}$-measurable random variables $\{\mathcal{M}_n\}_{n \in \mathbb{N}} \in l_{\infty}({L}^{\frac{p_0}{2}}(\Omega))$ such that, for every $n \in \mathbb{N}$,
 \begin{align*}
|\sigma_t^n(x)|^2  \leq L({M}^n+|x|^{\chi+2})
\end{align*}
almost surely for any $t \in [t_0, t_1]$  and $x \in \mathbb{R}^d$.
\end{rem}
%--------------------------------------------------------
\begin{rem} \label{rem:rate:rc:plolipschitz:sigma:gamma}
Due to Assumptions A-\ref{as:rate:rc:monotonicity} and A-\ref{as:rate:rc:polylipschitz}, there exist constants $L>0$, $\chi>0$ and  $C>0$ such that
\begin{align*}
|\sigma_t(x)-\sigma_t(\bar{x})|^2  \leq C(1+|x|^\chi+|\bar{x}|^\chi)|x-\bar{x}|^2
\end{align*}
almost surely for any $t \in [t_0, t_1]$ and $x, \bar{x} \in \mathbb{R}^d$.
\end{rem}

For the proof of Theorem \ref{thm:rate:rc}, the following lemma is needed.

\begin{lem} \label{lem:rate:rc:one-step}
Let Assumptions B-\ref{as:esrc:initial} to B-\ref{as:rate:rc:bn:polygrowth} be satisfied. Then for any $\rho \in [2,2p_0/(\chi+2)]$, the following holds,
\begin{align*}
\sup_{t_0 \leq t \leq t_1}E|x_t^n-x_{\kappa(n,t)}^n|^\rho \leq Kn^{-1}
\end{align*}
for every $n \in \mathbb{N}$, where  $K$ is a positive constant that does not depend on $n$.
\end{lem}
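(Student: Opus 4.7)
The plan is to decompose the one-step increment, using equation \eqref{eq:esrc}, as
\begin{align*}
x_t^n - x_{\kappa(n,t)}^n &= \int_{\kappa(n,t)}^{t} b_s^n(x_{\kappa(n,s)}^n)\,ds + \int_{\kappa(n,t)}^{t} \sigma_s^n(x_{\kappa(n,s)}^n)\,dw_s \\
&\quad + \int_{\kappa(n,t)}^{t}\int_Z \gamma_s^n(x_{\kappa(n,s)}^n,z)\,\tilde N(ds,dz),
\end{align*}
apply the elementary inequality $|a+b+c|^\rho \le 3^{\rho-1}(|a|^\rho+|b|^\rho+|c|^\rho)$ and estimate the three terms' $\rho$-th moments separately. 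The key improvement over Lemma \ref{lem:rc:one-step} is that, under the polynomial growth assumption B-\ref{as:rate:rc:bn:polygrowth} and the \emph{a priori} moment bound from Lemma \ref{lem:esrc:momentbound}, one need no longer absorb the $n^{1/2}$-factor coming from the taming assumption B-\ref{as:esrc:taming} into the bound on $\sigma_s^n$, and this is precisely what produces the sharper rate $n^{-1}$.

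For the drift term, H\"older's inequality gives $|\int_{\kappa}^{t} b_s^n\,ds|^\rho \le (t-\kappa)^{\rho-1}\int_{\kappa}^{t}|b_s^n|^\rho\,ds$, and taking expectation together with B-\ref{as:esrc:taming} yields a bound of order $n^{-\rho+1}\cdot n^{-1}\cdot n^{\rho/4} \cdot E(|M^n|^{\rho/2}+|x_{\kappa(n,s)}^n|^\rho) = O(n^{-3\rho/4})$, which is $\le K n^{-1}$ for every $\rho\ge 2$; the required moment bound on $|x_{\kappa(n,s)}^n|^\rho$ holds via Lemma \ref{lem:esrc:momentbound} since $\rho\le 2p_0/(\chi+2)\le p_0$. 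For the diffusion term, the Burkholder--Davis--Gundy inequality followed by Jensen's inequality gives
\begin{align*}
E\Big|\int_{\kappa}^{t}\sigma_s^n\,dw_s\Big|^\rho \le K (t-\kappa)^{\rho/2-1}\int_{\kappa}^{t} E|\sigma_s^n(x_{\kappa(n,s)}^n)|^\rho\,ds,
\end{align*}
and Remark \ref{rem:rate:sig:gam:plygrowth} controls the integrand by $K\,E(|M^n|^{\rho/2}+|x_{\kappa(n,s)}^n|^{(\chi+2)\rho/2})$; here the upper limit $\rho\le 2p_0/(\chi+2)$ guarantees $(\chi+2)\rho/2\le p_0$, so Lemma \ref{lem:esrc:momentbound} makes the bound uniform in $n$, producing an estimate of order $n^{-\rho/2}\le K n^{-1}$ for $\rho\ge 2$.

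For the jump term, Kunita's first inequality (the BDG-type estimate for compensated Poisson stochastic integrals) splits the $\rho$-th moment into a predictable-quadratic-variation piece bounded by $K\,E\bigl(\int_{\kappa}^{t}\int_Z|\gamma_s^n|^2\,\nu(dz)\,ds\bigr)^{\rho/2}$, and a pure-jump piece bounded by $K\,E\int_{\kappa}^{t}\int_Z|\gamma_s^n|^\rho\,\nu(dz)\,ds$. The first is controlled via B-\ref{as:esrc:coercivity}, Jensen's inequality, and Lemma \ref{lem:esrc:momentbound}, giving $O(n^{-\rho/2})\le K n^{-1}$; the second is handled by interpolating between B-\ref{as:esrc:coercivity} and B-\ref{as:esrc:gamma:growth} (equivalently, using $|\gamma_s^n|^\rho \le |\gamma_s^n|^2 + |\gamma_s^n|^{p_0}$ for $\rho\in[2,p_0]$), yielding a uniform bound and an overall estimate of order $n^{-1}$. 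Combining the three estimates and taking the supremum over $t\in[t_0,t_1]$ yields the claim.

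I expect the main subtlety to be accounting cleanly for the restriction $\rho\le 2p_0/(\chi+2)$: it is precisely the diffusion term where the polynomial growth of $\sigma_t^n$ (at rate $|x|^{(\chi+2)/2}$ from Remark \ref{rem:rate:sig:gam:plygrowth}) meets the $L^{p_0}$-moment bound of $x_{\kappa(n,s)}^n$, and balancing these two ingredients is what dictates the admissible range of $\rho$. Verifying that this single condition simultaneously covers the drift (via B-\ref{as:esrc:taming}) and the jump (via B-\ref{as:esrc:gamma:growth}) contributions, without any further moment assumptions, is the heart of the argument.
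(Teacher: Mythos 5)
Your proof is correct and follows essentially the same route as the paper: the same three-term decomposition of the one-step increment, H\"older and BDG/Kunita estimates for each piece, Remark \ref{rem:rate:sig:gam:plygrowth} combined with the constraint $\rho\le 2p_0/(\chi+2)$ to make the diffusion term compatible with Lemma \ref{lem:esrc:momentbound}, and the uncompensated jump contribution supplying the $n^{-1}$ cap on the rate. The only minor difference is that you bound the drift term via the taming Assumption B-\ref{as:esrc:taming} (yielding $O(n^{-3\rho/4})$), whereas the paper cites the polynomial growth Assumption B-\ref{as:rate:rc:bn:polygrowth}; both suffice here.
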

%--------------------------------------------------------
\begin{proof} By using equation \eqref{eq:esrc}, one obtains
\begin{align*}
E|x_t^n-x_{\kappa(n,t)}^n|^\rho &\leq K E\Big|\int^t_{\kappa(n,t)}b_s^n(x_{\kappa(n,s)}^n)ds\Big|^\rho+K E\Big|\int^t_{\kappa(n,t)}\sigma_s^n(x_{\kappa(n,s)}^n)dw_s\Big|^\rho
\\
&+K E\Big|\int^t_{\kappa(n,t)}\int_Z \gamma_s^n(x_{\kappa(n,s)}^n,z) \tilde{N}(ds, dz)\Big|^\rho
\end{align*}
which on the application of H\"older's inequality and an elementary inequality of stochastic integral yields,
\begin{align*}
E|x_t^n&-x_{\kappa(n,t)}^n|^\rho \leq K n^{-\rho+1}E\int^t_{\kappa(n,t)}|b_s^n(x_{\kappa(n,s)}^n)|^\rho ds
\\
&+K n^{-\frac{\rho}{2}+1}E\int^t_{\kappa(n,t)}|\sigma_s^n(x_{\kappa(n,s)}^n)|^\rho ds
\\
&+ K n^{-\frac{\rho}{2}+1} E\int^t_{\kappa(n,t)}\Big(\int_Z |\gamma_s^n(x_{\kappa(n,s)}^n,z)|^2 \nu(dz)\Big)^\frac{\rho}{2} ds
\end{align*}
\begin{align*}
&+K E\int^t_{\kappa(n,t)}\int_Z |\gamma_s^n(x_{\kappa(n,s)}^n, z)|^\rho \nu( dz) ds
\end{align*}
for any $t \in [t_0,t_1]$. Hence, Assumptions  B-\ref{as:esrc:gamma:growth}, B-\ref{as:rate:rc:bn:polygrowth}, Remark \ref{rem:rate:sig:gam:plygrowth} and Lemma \ref{lem:esrc:momentbound} complete the proof.
\end{proof}
%--------------------------------------------------------
\begin{proof}[\textit{\textbf{Proof of Theorem \ref{thm:rate:rc}}}]
By the application of It\^o's formula for equation \eqref{eq:rc:x-xn},
\begin{align} \label{eq:rate:rc:ito}
|x_t&-x_t^n|^{p} = |x_{t_0}-x_{t_0}^n|^{p} \notag
\\
&+ {p} \int_{t_0}^{t} |x_s-x_s^n|^{p-2} (x_s-x_s^n) (b_s( x_s)-b_s^n( x_{\kappa(n,s)}^n)) ds \notag
\\
&+ p\int_{t_0}^{t} |x_s-x_s^n|^{p-2} (x_s-x_s^n)(\sigma_s( x_s)- \sigma_s^n( x_{\kappa(n,s)}^n))dw_s  \notag
\\
& + \frac{p(p-2)}{2} \int_{t_0}^{t} |x_s-x_s^n|^{p-4}|(\sigma_s( x_s)- \sigma_s^n( x_{\kappa(n,s)}^n))^* (x_s-x_s^n)|^2ds \notag
\\
&+\frac{p}{2}\int_{t_0}^{t} |x_s-x_s^n|^{p-2}|\sigma_s(x_s)- \sigma_s^n( x_{\kappa(n,s)}^n)|^2 ds  \notag
\\
&+  p\int_{t_0}^{t} \int_{Z} |x_s-x_s^n|^{p-2} (x_s-x_s^n) (\gamma_s( x_{s},z)-\gamma^n( x_{\kappa(n,s)}^n,z))    \tilde N(ds,dz) \notag
\\
+\int_{t_0}^{t} &\int_{Z}\{ |x_s-x_s^n+\gamma_s( x_{s},z)-\gamma^n( x_{\kappa(n,s)}^n,z)|^{p}-|x_s-x_s^n|^{p} \notag
\\
&-p|x_s-x_s^n|^{p-2} (x_s-x_s^n)(\gamma_s( x_{s},z)-\gamma^n( x_{\kappa(n,s)}^n,z)) \}N(ds,dz)
\end{align}
almost surely for any $t \in [t_0, t_1]$. One uses the formula for the remainder for the last term on the right hand side of the above equation along with the Schwarz inequality and obtains,
\begin{align}
E|x_t&-x_t^n|^{p} \leq  E|x_{t_0}-x_{t_0}^n|^{p} \notag
\\
&+ p E\int_{t_0}^{t} |x_s-x_s^n|^{p-2}  (x_s-x_s^n) (b_s( x_s)-b_s^n( x_{\kappa(n,s)}^n))ds \notag
\\
&+ \frac{p(p-1)}{2} E\int_{t_0}^{t} |x_s-x_s^n|^{p-2}|\sigma_s( x_s)- \sigma_s^n( x_{\kappa(n,s)}^n)|^2 ds \notag
\\
& +K E\int_{t_0}^{t} \int_{Z} |x_s-x_s^n|^{p-2} |\gamma_s( x_{s},z)-\gamma^n( x_{\kappa(n,s)}^n,z)|^{2} \nu(dz) ds \notag
%\\
%& +K E\int_{t_0}^{t} \int_{Z} |x_s-x_s^n|^{p-3} |\gamma_s( x_{s},z)-\gamma^n( x_{\kappa(n,s)}^n,z)|^{3} \nu(dz) ds \notag
\\
&+KE\int_{t_0}^{t} \int_{Z}|\gamma_s( x_{s},z)-\gamma^n( x_{\kappa(n,s)}^n,z)|^p \nu(dz) ds \notag
\end{align}
for any $t \in [t_0, t_1]$. The above can further be written as,
\begin{align}
&E|x_t-x_t^n|^{p} \leq  E|x_{t_0}-x_{t_0}^n|^{p} \notag
\\
&+ \frac{p}{2} E\int_{t_0}^{t} |x_s-x_s^n|^{p-2} \{2 (x_s-x_s^n) (b_s( x_s)-b_s( x_s^n))  \notag
\\
& \qquad+(p-1)|\sigma_s(x_s)-\sigma_s(x_s^n)|^2 \notag
\\
&\qquad +2(p-1)(\sigma_s(x_s)-\sigma_s(x_s^n))(\sigma_s(x_s^n)-\sigma_s^n(x_{\kappa(n,s)}^n))\}ds\notag
%\\
%&\qquad+(p-1)\int_Z|\gamma_s(x_s,z)-\gamma_s(x_s^n,z)|^2 \nu(dz) \notag
%\\
%&\qquad +2(p-1)\int_Z(\gamma_s(x_s,z)-\gamma_s(x_s^n,z))(\gamma_s(x_s^n,z)-\gamma_s^n(x_{\kappa(n,s)}^n,z))\nu(dz)\Big\}ds\notag
\\
&+ p E\int_{t_0}^{t} |x_s-x_s^n|^{p-2}  (x_s-x_s^n) (b_s( x_s^n)-b_s^n( x_{\kappa(n,s)}^n))ds \notag
\\
&+ \frac{p(p-1)}{2} E\int_{t_0}^{t} |x_s-x_s^n|^{p-2}|\sigma_s( x_s^n)- \sigma_s^n( x_{\kappa(n,s)}^n)|^2 ds \notag
\\
& + K E\int_{t_0}^{t} \int_{Z} |x_s-x_s^n|^{p-2} |\gamma_s( x_{s},z)-\gamma^n( x_{\kappa(n,s)}^n,z)|^{2} \nu(dz) ds \notag
%\\
%& +K E\int_{t_0}^{t} \int_{Z} |x_s-x_s^n|^{p-3} |\gamma_s( x_{s},z)-\gamma^n( x_{\kappa(n,s)}^n,z)|^{3} \nu(dz) ds \notag
\\
&+K E\int_{t_0}^{t} \int_{Z}|\gamma_s( x_{s},z)-\gamma^n( x_{\kappa(n,s)}^n,z)|^p \nu(dz) ds \label{eq:rate:p1}
\end{align}
for any $t \in t\in [t_0, t_1]$. For the second term on the right hand side of the above inequality, one uses Young's inequality, $2ab \leq a^2/(2\epsilon)+\epsilon b^2/2$, with $\epsilon=(p-1)/(2(p_1-p))$ (since $p<p_1$) to obtain the following estimates,
\begin{align*}
(p-1)&|\sigma_s(x_s)-\sigma_s(x_s^n)|^2 +2(p-1)(\sigma_s(x_s)-\sigma_s(x_s^n))(\sigma_s(x_s^n)-\sigma_s^n(x_{\kappa(n,s)}^n))
\\
&\leq (p-1)|\sigma_s(x_s)-\sigma_s(x_s^n)|^2+(p-1)\frac{p_1-p}{p-1}|\sigma_s(x_s)-\sigma_s(x_s^n)|^2
\\
& \qquad+\frac{(p-1)^2}{4(p_1-p)}|\sigma_s(x_s^n)-\sigma_s^n(x_{\kappa(n,s)}^n)|^2
\\
&= (p_1-1)|\sigma_s(x_s)-\sigma_s(x_s^n)|^2+K|\sigma_s(x_s^n)-\sigma_s^n(x_{\kappa(n,s)}^n)|^2
\end{align*}
which on substituting in the right side of \eqref{eq:rate:p1} gives
\begin{align}
E|x_t&-x_t^n|^{p} \leq  E|x_{t_0}-x_{t_0}^n|^{p} \notag
\\
&+ \frac{p}{2} E\int_{t_0}^{t} |x_s-x_s^n|^{p-2} \big\{2 (x_s-x_s^n) (b_s( x_s)-b_s( x_s^n)) \notag
\\
&\qquad +(p_1-1)|\sigma_s(x_s)-\sigma_s(x_s^n)|^2\}ds \notag
%\\
%&\qquad +(p_1-1)\int_Z|\gamma_s(x_s)-\gamma_s(x_s^n)|^2 \nu(dz)\}ds \notag
\\
&+ p E\int_{t_0}^{t} |x_s-x_s^n|^{p-2}  (x_s-x_s^n) (b_s( x_s^n)-b_s^n( x_{\kappa(n,s)}^n))ds \notag
\\
&+ K E\int_{t_0}^{t} |x_s-x_s^n|^{p-2}|\sigma_s( x_s^n)- \sigma_s^n( x_{\kappa(n,s)}^n)|^2 ds \notag
\\
& +K E\int_{t_0}^{t} |x_s-x_s^n|^{p-2}\int_{Z}  |\gamma_s( x_{s},z)-\gamma( x_{s}^n,z)|^{2} \nu(dz) ds \notag
\\
& +K E\int_{t_0}^{t} |x_s-x_s^n|^{p-2}\int_{Z}  |\gamma_s( x_{s}^n,z)-\gamma^n( x_{\kappa(n,s)}^n,z)|^{2} \nu(dz) ds \notag
%\\
%& +K E\int_{t_0}^{t} |x_s-x_s^n|^{p-3} \int_{Z}  |\gamma_s( x_{s},z)-\gamma( x_s^n,z)|^{3} \nu(dz) ds \notag
%\\
%& +K E\int_{t_0}^{t} |x_s-x_s^n|^{p-3} \int_{Z}  |\gamma_s( x_{s}^n,z)-\gamma^n( x_{\kappa(n,s)}^n,z)|^{3} \nu(dz) ds \notag
\\
&+K E\int_{t_0}^{t} \int_{Z}|\gamma_s( x_{s},z)-\gamma( x_s^n,z)|^p \nu(dz) ds \notag
\\
&+K E\int_{t_0}^{t} \int_{Z}|\gamma_s( x_{s}^n,z)-\gamma^n( x_{\kappa(n,s)}^n,z)|^p \nu(dz) ds \notag
\end{align}
which on the application of Assumptions A-\ref{as:rate:rc:monotonicity}, A-\ref{as:rate:rc:gamma:lipschitz}, Schwarz inequality and Young's inequality yields,
\begin{align*}
E|x_t&-x_t^n|^{p} \leq  E|x_{t_0}-x_{t_0}^n|^{p} + K \int_{t_0}^{t} E|x_s-x_s^n|^{p} ds \notag
\\
&+ K E\int_{t_0}^{t} |b_s( x_s^n)-b_s( x_{\kappa(n,s)}^n)|^p ds \notag
\\
&+ K E\int_{t_0}^{t} |b_s( x_{\kappa(n,s)}^n)-b_s^n( x_{\kappa(n,s)}^n)|^p ds \notag
\\
&+ K E\int_{t_0}^{t} |\sigma_s( x_s^n)- \sigma_s( x_{\kappa(n,s)}^n)|^p ds \notag
\\
&+ K E\int_{t_0}^{t} |\sigma_s( x_{\kappa(n,s)}^n)- \sigma_s^n( x_{\kappa(n,s)}^n)|^p ds \notag
\\
& +K E\int_{t_0}^{t} \Big(\int_{Z}  |\gamma_s( x_{s}^n,z)-\gamma( x_{\kappa(n,s)}^n,z)|^{2} \nu(dz)\Big)^\frac{p}{2} ds \notag
\\
& +K E\int_{t_0}^{t} \Big(\int_{Z}  |\gamma( x_{\kappa(n,s)}^n,z)-\gamma^n( x_{\kappa(n,s)}^n,z)|^{2} \nu(dz)\Big)^\frac{p}{2} ds \notag
%\\
%& +K E\int_{t_0}^{t} \Big( \int_{Z}  |\gamma( x_s^n,z)-\gamma( x_{\kappa(n,s)}^n,z)|^{3} \nu(dz)\Big)^\frac{p}{3} ds \notag
%\\
%& +K E\int_{t_0}^{t} \Big( \int_{Z}  |\gamma( x_{\kappa(n,s)}^n,z)-\gamma^n( x_{\kappa(n,s)}^n,z)|^{3} \nu(dz)\Big)^\frac{p}{3} ds \notag
\\
&+K E\int_{t_0}^{t} \int_{Z}|\gamma_s( x_{s}^n,z)-\gamma( x_{\kappa(n,s)}^n,z)|^p \nu(dz) ds \notag
\end{align*}
\begin{align}
&+K E\int_{t_0}^{t} \int_{Z}|\gamma( x_{\kappa(n,s)}^n,z)-\gamma^n( x_{\kappa(n,s)}^n,z)|^p \nu(dz) ds \notag
\end{align}
for any $t \in [t_0, t_1]$. By using Remark \ref{rem:rate:rc:plolipschitz:sigma:gamma}, Assumptions A-\ref{as:rate:rc:gamma:lipschitz} and A-\ref{as:rate:rc:polylipschitz}, one gets,
\begin{align*}
E|x_t&-x_t^n|^{p} \leq  E|x_{t_0}-x_{t_0}^n|^{p} + K \int_{t_0}^{t} E|x_s-x_s^n|^{p} ds +K \int_{t_0}^{t_1} E| x_s^n- x_{\kappa(n,s)}^n|^{p} ds\notag
\\
&+ K E\int_{t_0}^{t_1} (1+|x_s^n|^\chi+|x_{\kappa(n,s)}^n|^\chi)^p|x_s^n- x_{\kappa(n,s)}^n|^p ds \notag
\\
&+ K E\int_{t_0}^{t_1} (1+|x_s^n|^\chi+|x_{\kappa(n,s)}^n|^\chi)^\frac{p}{2}|x_s^n-x_{\kappa(n,s)}^n|^p ds \notag
\\
&+ K E\int_{t_0}^{t_1} |b_s( x_{\kappa(n,s)}^n)-b_s^n( x_{\kappa(n,s)}^n)|^p ds \notag
\\
&+ K E\int_{t_0}^{t_1} |\sigma_s( x_{\kappa(n,s)}^n)- \sigma_s^n( x_{\kappa(n,s)}^n)|^p ds \notag
\\
& +K E\int_{t_0}^{t_1} \Big(\int_{Z}  |\gamma( x_{\kappa(n,s)}^n,z)-\gamma^n( x_{\kappa(n,s)}^n,z)|^{2} \nu(dz)\Big)^\frac{p}{2} ds \notag
%\\
%& +K E\int_{t_0}^{t_1} \Big( \int_{Z}  |\gamma( x_{\kappa(n,s)}^n,z)-\gamma^n( x_{\kappa(n,s)}^n,z)|^{3} \nu(dz)\Big)^\frac{p}{3} ds \notag
\\
&+K E\int_{t_0}^{t_1} \int_{Z}|\gamma( x_{\kappa(n,s)}^n,z)-\gamma^n( x_{\kappa(n,s)}^n,z)|^p \nu(dz) ds \notag
\end{align*}
for any $t \in [t_0, t_1]$. Thus, the application of Gronwall's lemma and H\"older's inequality gives the following estimates,
\begin{align}
&\sup_{t_0 \leq t \leq t_1}E|x_t-x_t^n|^{p} \leq E|x_{t_0}-x_{t_0}^n|^{p} +K \sup_{t_0\leq t \leq t_1} E| x_t^n- x_{\kappa(n,t)}^n|^{p} \notag
\\
&+ K \int_{t_0}^{t_1} \big(E(1+|x_s^n|^\chi+|x_{\kappa(n,s)}^n|^\chi)^{\frac{p(p+\delta)}{\delta}}\big)^\frac{\delta}{p+\delta}\big(E|x_s^n- x_{\kappa(n,s)}^n|^{p+\delta}\big)^{\frac{p}{p+\delta}} ds \notag
\\
&+ K \int_{t_0}^{t_1} \big(E\big(1+|x_s^n|^\chi+|x_{\kappa(n,s)}^n|^\chi\big)^{\frac{p}{2}\frac{p+\delta}{\delta}}\big)^\frac{\delta}{p+\delta}\big(E|x_s^n-x_{\kappa(n,s)}^n|^{p+\delta}\big)^\frac{p}{p+\delta} ds \notag
\\
&+ K E\int_{t_0}^{t_1} |b_s( x_{\kappa(n,s)}^n)-b_s^n( x_{\kappa(n,s)}^n)|^p ds \notag
\\
&+ K E\int_{t_0}^{t_1} |\sigma_s( x_{\kappa(n,s)}^n)- \sigma_s^n( x_{\kappa(n,s)}^n)|^p ds \notag
\\
& +K E\int_{t_0}^{t_1} \Big(\int_{Z}  |\gamma( x_{\kappa(n,s)}^n,z)-\gamma^n( x_{\kappa(n,s)}^n,z)|^{2} \nu(dz)\Big)^\frac{p}{2} ds \notag
%\\
%& +K E\int_{t_0}^{t_1} \Big( \int_{Z}  |\gamma( x_{\kappa(n,s)}^n,z)-\gamma^n( x_{\kappa(n,s)}^n,z)|^{3} \nu(dz)\Big)^\frac{p}{3} ds \notag
\\
&+K E\int_{t_0}^{t_1} \int_{Z}|\gamma( x_{\kappa(n,s)}^n,z)-\gamma^n( x_{\kappa(n,s)}^n,z)|^p \nu(dz) ds \notag
\end{align}
for any $t \in [t_0, t_1]$. The proof is completed by using Lemmas [\ref{lem:esrc:momentbound}, \ref{lem:rate:rc:one-step}] and Assumptions AB-\ref{as:rate:rc:rate} and AB-\ref{as:rate:rc:initial}.
\end{proof}
%--------------------------------------------------------
\section{Numerical Examples}
\subsection*{Example 1} Let us consider the following SDE
\begin{align} \label{eq:num:ex1:sde}
dx_t=(x_t-x_t^3)dt+x_t^2dw_t+x_t\int_{\mathbb{R}} z \tilde{N}(ds,dz)
\end{align}
almost surely for any $t \in [0,1]$ with initial value $x_0=1$. Let us assume that jump intensity is $2$ and mark random variable follows $U(-1/4,1/4)$. The explicit Euler-type scheme is given by
\begin{align} \label{eq:num:ex1:es}
x_{(k+1)h}^n=x_{kh}^n+\frac{x_{kh}^n-(x_{kh}^n)^3}{1+\sqrt{h}|x_{kh}^n|^2} h+\frac{(x_{kh}^n)^2}{1+\sqrt{h}|x_{kh}^n|^2} \Delta w_k+x_{kh}^n \hspace{-2mm}\sum_{i=N(kh)}^{N((k+1)h)} z_i
\end{align}
almost surely for any $k=1,\ldots,n$, where $nh=1$ and $x_0^n=x_0$. In the above, the last term denotes the sum of the jumps in the interval $[kh,(k+1)h]$. As equation \eqref{eq:num:ex1:sde} does not have any explicit solution, the scheme \eqref{eq:num:ex1:es} with step-size $h=2^{-21}$ is treated as the solution of the SDE \eqref{eq:num:ex1:sde}  in the numerical experiment. The number of simulations is $60,000$. The numerical results of Table \ref{tab:num:ex1} and Figure \ref{fig:num:ex1} demonstrate that our numerical findings are consistent with the theoretical results achieved in this paper.
\begin{table}[h]\small
\begin{tabular}{ ||c |c ||c| c|| c| c||} \hline
 $h$ & $\sqrt{E}|x_T-x_T^n|^2$ & $h$ & $\sqrt{E}|x_T-x_T^n|^2$ & $h$ & $\sqrt{E}|x_T-x_T^n|^2$ \\  \hline
$2^{-20}$  & $0.00084487$ & $2^{-15}$  & $0.01090762$ & $2^{-10}$  & $0.04841924$  \\
$2^{-19}$  & $0.00175060$ & $2^{-14}$  & $0.01535016$  & $2^{-9}$   & $0.06225525$ \\
$2^{-18}$  & $0.00297191$ & $2^{-13}$  & $0.02114921$ & $2^{-8}$  & $0.08096656$ \\
$2^{-17}$  & $0.00474922$ & $2^{-12}$ & $0.02838053$ & $2^{-7}$   & $0.10263840$\\
$2^{-16}$  & $0.00744872$ & $2^{-11}$  & $0.03768887$ & $2^{-6}$   & $0.12921045$ \\ \hline
\end{tabular}
\caption{\small $\mathcal{L}^2$-convergence of Euler-type scheme \eqref{eq:num:ex1:es} of SDE \eqref{eq:num:ex1:sde}}
\label{tab:num:ex1}
\end{table}
%--------------------------------------------------------
\begin{figure}
\includegraphics[scale=.40]{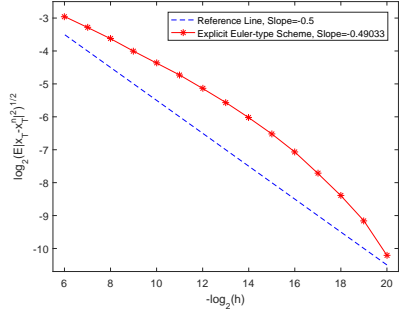}
\caption{\small $\mathcal{L}^2$-convergence of Euler-type scheme \eqref{eq:num:ex1:es} of SDE \eqref{eq:num:ex1:sde}}
\label{fig:num:ex1}
\end{figure}
%--------------------------------------------------------
%--------------------------------------------------------

\end{document}